\documentclass[letterpaper]{imanum}

\newcommand{\lemref}[1]{Lemma \ref{lem:#1}}
\newcommand{\remref}[1]{Remark \ref{rem:#1}}
\newcommand{\thmref}[1]{Theorem \ref{thm:#1}}
\newcommand{\corref}[1]{Corollary \ref{cor:#1}}
\newcommand{\defref}[1]{Definition \ref{def:#1}}
\newcommand{\secref}[1]{Section \ref{sec:#1}}

\newcommand{\figref}[1]{Figure \ref{fig:#1}}
\newcommand{\tabref}[1]{Table \ref{tab:#1}}
\newcommand{\equref}[1]{(\ref{eq:#1})}

\newcommand{\condref}[1]{\ref{cond:#1}}
\newcommand{\figloc}[1]{\emph{(#1)}}

\usepackage[all]{nowidow}
\usepackage{graphicx}
\usepackage{amsmath,amsfonts,mathtools}
\usepackage{xcolor}
\usepackage{nowidow}
\usepackage{enumitem}
\usepackage[hidelinks]{hyperref}
\AtBeginDocument{
  \label{CorrectFirstPageLabel}
  
}
\usepackage{tikz-cd}

\usepackage{tabularx}
\usepackage{array}
\newcolumntype{L}[1]{>{\raggedright\arraybackslash} m{#1} }
\newcolumntype{C}[1]{>{\centering\arraybackslash} m{#1} }

\DeclareMathAlphabet{\mathcal}{OMS}{cmsy}{m}{n}
\newcommand{\Surf}{\Gamma} \newcommand{\dSurf}{{\Gamma_h}} \newcommand{\Lap}{\Delta_\Surf}  \newcommand{\grad}{\nabla_\Surf} \newcommand{\dgrad}{\nabla_\dSurf}  
   \newcommand{\ltpr}[2]{{\left( #1, #2 \right)_0}}
\newcommand{\ltpra}[2]{{\left( #1, #2 \right)_{0,h}}}
\newcommand{\hopr}[2]{{\left( #1, #2 \right)_1}}
\newcommand{\hopra}[2]{{\left( #1, #2 \right)_{1,h}}}
\newcommand{\hilbertpr}[2]{{\left\langle #1, #2 \right\rangle}}
\newcommand{\tup}[2]{{\left( #1, #2 \right)}}
\newcommand{\ciarlsmooth}{V}
\newcommand{\ciarldiscronsurf}{\widetilde{V}^h}
\newcommand{\ciarldiscronmesh}{V^h}
\newcommand{\ciarlfunc}{J}
\newcommand{\ciarlfunca}{J_h}
\newcommand{\rieszsmooth}{\mathcal{R}}
\newcommand{\rieszdiscronsurf}{\widetilde{\rieszsmooth}^h}
\newcommand{\rieszdiscronmesh}{\rieszsmooth^h}
\newcommand{\discrlap}{\widetilde{L}^h}
\newcommand{\discrlapa}{L^h}
\newcommand{\Fsmooth}{F}
\newcommand{\Fdiscronsurf}{\widetilde{F}^h}
\newcommand{\Fdiscronmesh}{F^h}
\newcommand{\discrspacesmap}{W^h}
\newcommand{\usmooth}{u}
\newcommand{\udiscronsurf}{\tilde{u}^h}
\newcommand{\udiscronmesh}{u^h}
\newcommand{\secondderiv}{D^2_{\Surf}}

\newcommand{\secondderiva}{D^2_{\Surf\!\!,\,h}}
\DeclareMathOperator*{\argmin}{argmin}
\newcommand{\R}{\mathbb{R}}
\newcommand{\Rtwo}{\mathbb{R}^2}
\newcommand{\Rthree}{\mathbb{R}^3}

\newcommand{\Ho}{{H^1}}
\newcommand{\Hoa}{{H^1_h}}
\newcommand{\Ht}{{H^2}}

\newcommand{\tHt}{{\tilde{H}^2}}

\newcommand{\Hf}{{H^4}}
\newcommand{\Hk}{{H^k}}
\newcommand{\Hoo}{{H^1_0}}\newcommand{\Hto}{{H^2_0}}\newcommand{\tHoo}{{\tilde{H}^1_0}}
\newcommand{\Hooa}{{H^1_{0,h}}}\newcommand{\hatSh}{\hat{S}_h}
\newcommand{\Sh}{{S_h}}
\newcommand{\Shlt}{{S_{h; \Lt}}}
\newcommand{\hatSho}{\hat{S}_{h,0}}
\newcommand{\hatf}{\hat{f}}
\newcommand{\Sho}{{S_{h,0}}}\newcommand{\Lp}{{L^p}}

\newcommand{\Lt}{{L^2}}
\newcommand{\Lta}{{L^2_h}}
\newcommand{\Linf}{{L^\infty}}
\newcommand{\Wkp}{{W^{k,p}}}\newcommand{\Woinf}{{W^{1,\infty}}}\newcommand{\tWoinf}{{\tilde{W}^{1,\infty}}}

\newcommand{\Woinfa}{{W^{1,\infty}_h}}\newcommand{\Wtinf}{{W^{2,\infty}}}\newcommand{\tWtinf}{{\tilde{W}^{2,\infty}}}

\newcommand{\Wtp}{{W^{2,p}}}

\newcommand{\norm}[1]{{\left\lVert #1 \right\rVert}}

\newcommand{\abs}[1]{{\left| #1 \right|}}

\newcommand{\id}{\operatorname{Id}}
\newcommand{\Ih}{{I_h}\,}
\newcommand{\Rh}{{R_h}\,}
\newcommand{\Rho}{{R_{h,0}\,}}\newcommand{\Rha}{{R_h^{(h)}}}
\newcommand{\Rhao}{{R_{h,0}^{(h)}}\,}\newcommand{\hatuoh}{{\hat{u}_1^h}}
\newcommand{\uoh}{{u_1^h}}
\newcommand{\hatuth}{{\hat{u}_2^h}}
\newcommand{\uth}{{u_2^h}}
\newcommand{\uoht}{{\tilde{u}_1^h}}
\newcommand{\utht}{{\tilde{u}_2^h}}
\newcommand{\dd}{\operatorname{d}}
\newcommand{\dx}{\dd x}

\newcommand{\defeq}{\vcentcolon=}

\jno{drnxxx}
\received{19 December 2020}

\begin{document}

\title{A mixed finite element method with piecewise linear elements for the
biharmonic equation on surfaces}
\shorttitle{Linear mixed FEM for the biharmonic equation on surfaces}

\author{{\sc Oded Stein\thanks{Corresponding author. Email: ostein@mit.edu}} \\[2pt]
Massachusetts Institute of Technology, MA, USA, and\\
Columbia University, NY, USA\\[6pt]
{\sc and}\\[6pt]
{\sc Eitan Grinspun} \\[2pt]
University of Toronto, ON, Canada, and\\
Columbia University, NY, USA\\[6pt]
{\sc and}\\[6pt]
{\sc Alec Jacobson} \\[2pt]
University of Toronto, ON, Canada\\[6pt]
{\sc and}\\[6pt]
{\sc Max Wardetzky} \\[2pt]
University of G\"ottingen, Germany\\\quad
}

\shortauthorlist{O. Stein \emph{et al.}}

\maketitle

\begin{abstract}
{

The biharmonic equation with Dirichlet and Neumann boundary conditions
discretized using the mixed finite element method and
piecewise linear (with the possible exception of boundary triangles)
finite elements
on triangular elements has been well-studied for domains
in \(\Rtwo\).
Here we study the analogous problem on polyhedral surfaces.
In particular, we provide a convergence proof of discrete solutions to the
corresponding smooth solution of the biharmonic equation. 
We obtain convergence rates that are identical to the ones known for the planar
setting.
Our proof focuses on three different problems:
solving the biharmonic equation on the surface,
solving the biharmonic equation in a discrete space in the metric of the
surface, and
solving the biharmonic equation in a discrete space in the metric of the 
polyhedral approximation of the surface.
We employ inverse discrete Laplacians to bound the error between the solutions
of the two discrete problems, and generalize a flat strategy to bound the
remaining error between the discrete solutions and the exact solution on the
curved surface.
 }
{biharmonic equation; polyhedral surfaces; mixed finite elements; discrete geometry.}
\end{abstract}

\section{Introduction}
\label{sec:introduction}
We consider the \emph{biharmonic equation} on smooth
surfaces embedded in three-dimensional Euclidean space:
given a function \(f\) on a smooth surface \(\Surf\)
with smooth boundary \(\partial\Gamma\),
find a function \(u\) such that
\begin{equation}
    \Lap^2 u = f
    \;\textrm{,}
\end{equation}
where \(\Lap\) is the
Laplace--Beltrami operator on the smooth surface \(\Surf\).
This Laplacian arises from the Riemannian metric \(g\) on \(\Surf\), where \(g\)
is inherited from ambient three-dimensional Euclidean space.
If boundaries are present, boundary conditions must be taken into account (where we assume that $\Surf$ has a smooth boundary).
We consider Dirichlet and Neumann boundary conditions,
\begin{equation}
    u = \frac{\partial u}{\partial \mathbf{n}} = 0
    \quad\quad
    \textrm{at the boundary,}
\end{equation}
where \(\frac{\partial u}{\partial \mathbf{n}}\) denotes the
co-normal
derivative of \(u\) at the boundary -- the scalar product of the function's
gradient and the boundary normal.
For flat surfaces, this problem is sometimes referred to as the
\emph{clamped thin plate problem}.
If no boundaries are present,  \(f\) and \(u\) must have zero mean, i.e., 
		\(\int_\Surf f \; \dx  = \int_\Surf u \; \dx = 0\).\\

In this paper, we use a \emph{mixed approach}, which
corresponds to solving a linear system of two equations:
in \(u_1\) (which corresponds to the solution \(u\)), and in
\(u_2\) (which corresponds to the Laplacian \(\Lap u\) of the solution).
Given \(f\in \Lt(\Surf)\), the (smooth) mixed formulation takes the following
form: Find \(u_1 \in \Hoo(\Surf), \; u_2 \in \Ho(\Surf)\) such that
\begin{equation}\label{eq:intro:smoothmixedformulation}
	\hopr{u_2}{\xi} = \ltpr{f}{\xi} \quad\forall \xi \in \Hoo(\Surf) \quad\text{and}\quad
	\hopr{u_1}{\eta} = \ltpr{u_2}{\eta} \quad \forall \eta \in \Ho(\Surf)
	\;\textrm{.}
\end{equation}
Here the Sobolev spaces \(\Lt(\Surf)\) and \(\Hoo(\Surf)\) are equipped with the
inner products 
\begin{equation}\label{eq:intro:smoothInnerProducts}
	\ltpr{u}{v} =  \int_\Surf u v \;\dx \quad \text{and} \quad \hopr{u}{v} =
    \int_\Surf g\left(\grad u, \grad v\right) \;\dx
	\;\textrm{,}
\end{equation}
respectively, where \(g\) denotes the Riemannian metric on \(\Surf\),
and \(\grad\) denotes the gradient on \(\Surf\).
The mixed method can be formulated for any \(u_1 \in \Hoo(\Surf)\) and \(u_2 \in \Ho(\Surf)\),
and has a unique solution such that
\(u_1 \in \Hoo(\Surf) \cap \Hf(\Surf)\) and \(u_2 \in \Ht(\Surf)\),
as we assumed smooth \(\partial\Gamma\)
\cite[]{GazzolaPolyharmonic}.\\

In order to solve \equref{intro:smoothmixedformulation} numerically, we use a
corresponding mixed finite element method on a polyhedral surface $\dSurf$ that
is nearby the smooth surface $\Surf$ in the sense of conditions \textbf{(C1-C4)}
from \secref{discretesurface}.
In particular, we consider $\dSurf$ to be a mesh with piecewise flat triangles
with straight edges
(with the exception of boundary triangles, which can have curved edges along the boundary -- see \figref{shortestdistance}), together with a bijection $\Psi: \dSurf \to \Surf$ that is defined via the closest point projection.
As finite elements we use the space $\hatSh$ of
\emph{piecewise linear elements} on $\dSurf$
(with the exception of boundary triangles, which can have modified
elements).
Then the \emph{discrete} mixed formulation on $\dSurf$ is:
Find \(\hatuoh \in \hatSho, \; \hatuth \in \hatSh\) such that
\begin{equation}\label{eq:intro:discreteproblemexplicitlymesh}
\int_\dSurf \dgrad \hatuth \cdot \dgrad \xi \dx =
    \int_\dSurf \hatf \xi \dx \;\;\;\; \forall \xi \in \hatSho
    \quad\text{and}\quad
    \int_\dSurf \dgrad \hatuoh \cdot \dgrad \eta \dx =
    \int_\dSurf \hatuth \eta \dx \;\;\;\; \forall \eta \in \hatSh
    \;\textrm{,}
\end{equation}
where $\hatf = f\circ \Psi$ is the evaluation of the right hand side $f$ from \equref{intro:smoothmixedformulation} on $\dSurf$, and \(\hatSho\defeq \hatSh\cap \Hoo(\dSurf)\).
\\

In our approach, every mesh $\dSurf$ is required to have \emph{uniformly shape-regular triangles}. Moreover, we consider sequences of triangle meshes that converge to a given 
smooth limit surface such that both \emph{positions} and \emph{normals} converge at a certain rate (to be specified later).
Letting $\uoh = \hatuoh \circ \Psi^{-1}$ and $\uth = \hatuth \circ \Psi^{-1}$ denote the \emph{liftings} of the discrete solutions from $\dSurf$ to $\Surf$, we show:
\begin{itemize}
    \item \(\Lt\)-convergence of $\uoh$ to $u_{1}$ of order \(h\)
    (\thmref{u1error});
    \item \(\Ho\)-convergence of $\uoh$ to $u_{1}$ of order \(h^\frac{3}{4}\)
    (\corref{u1h1error});
    \item \(\Lt\)-convergence of $\uth$ to $u_{2}$ of order \(\sqrt{h}\)
    (\thmref{u2error});
\end{itemize}
where \(h\) is the maximum edge length of the approximating triangles of the mesh.
If no boundaries are present, the problem becomes simpler and we observe that
better convergence rates can be obtained.\\

Before continuing with an overview of our proof, we provide a few historical remarks on mixed finite elements and finite elements for curved surfaces for context.
\paragraph*{Mixed finite elements for the biharmonic equation in \(\Rtwo\).}
\citet{Ciarlet1974} introduce the mixed finite element
method for the biharmonic problem. Their work informs the functional analysis framework that we use here.
They solve the same system of equations that we end up solving
(in the flat case), but only show convergence for higher-order (\(\geq 2\))
Lagrangian finite elements.
Their approach is later expanded by \citet{Monk1987} to deal with boundary smoothness
problems caused by triangulating (in the flat case).

\citet{Scholz1978} proves that the mixed finite element method
with linear, first-order Lagrange elements can be used to solve the biharmonic problem, and
he gives an error bound of \(h \log^2 h\) in the \(L^2\) norm of the solution.
The result by Scholz is central to understanding the convergence of the
linear finite element method for the biharmonic equation, and
forms the basis of our proof.
The result is remarkable, since it shows convergence of the method, even
though the standard convergence conditions for mixed finite elements
(the \emph{inf-sup} conditions \cite[]{BoffiMixedFEM})
are not fulfilled.
Scholz's error estimate is not optimal, as it relies on an \(L^\infty\) estimate
of the Ritz projection
error by \citet{Nitsche1978}.
An application of a later, better bound for the same interpolation error
\cite[]{Rannacher1982} gives convergence of order \(h\).

\citet{Oukit1996} provide an analysis of the biharmonic equation with Dirichlet
and Neumann boundary conditions that combines a hybrid approach
(which they call Hermann--Miyoshi) and the mixed approach (which they call
Ciarlet--Raviart).
Their analysis holds for first and second order elements.
The result by \citet{Scholz1978} is recovered in the limit
\(p \rightarrow \infty\), where \(p\) is the degree of the \(L^p\) space
used in their estimate (5.39).
An alternative approach to solving the biharmonic equation using mixed finite
elements is the decomposition into four linear equations, such
as done by \citet{Behrens2011} (which leads to superconvergence of
the solution) and \citet{Li2017}.

\paragraph*{Finite elements on curved surfaces.}
\citet{Dziuk1988} generalizes the standard result for solving the Poisson
equation with linear finite elements from \(\mathbb{R}^2\) to smooth surfaces by working with \emph{inscribed} meshes, i.e.,
requiring that vertices of the approximating mesh be contained in the limit
surface.
His approach to analyzing discretizations of curved surfaces has since been used in advanced numerical methods for curved surfaces
\cite[]{Dziuk2007,Demlow2007,Olshanskii2009,Du2011,Dziuk2013b}.
An overview of methods to discretize the Laplace--Beltrami operator on curved
surfaces can be found, for example, in the works of
\citet{Dziuk2013a,Bonito2019}.

\citet{Wardetzky2006} and \citet{Hildebrandt2006} generalize Dziuk's result to non-inscribed meshes.
We also work with this generalized setting here, as non-inscribed meshes are
prevalent in various applications, e.g., in geometry processing.
More specifically, the setting that we consider here, namely discretizations of the 
biharmonic equation
(and its related Helmholtz problem) using a mixed formulation with linear
Lagrange elements have been popular in practice
\cite[]{Desbrun1999,Sorkine2004,Bergou2006,Garg2007,Tosun2008,Jacobson2010,Jacobson2011,Jacobson2012}.
In this article, we provide a justification for the use of the linear mixed
finite element method for such applications. 

Meanwhile, other methods for solving the biharmonic equation on surfaces exist
in the literature.
\citet{larsson2017} use the discontinuous Galerkin approach to achieve a method for
surfaces without boundary where the \(L^2\) error is of order \(h\).
\citet{cockburn2016} use a different kind of discontinuous Galerkin method as
well as non-conforming mixed finite elements (Raviart--Thomas,
Brezzi--Douglas--Marini, Brezzi--Douglas--Fortin--Marini).
\citet{Elliott2019} analyze a generalized framework for the solution of a
variety of fourth-order problems formulated as saddle point problems in a
Dziuk-like setting.
Fourth-order problems and their discrete formulations also arise in fluid dynamics, see, e.g., \cite[]{Reisken2018}.

\paragraph*{Proof strategy.}
The above mentioned  \emph{liftings} $\uoh$ and $\uth$ of discrete Lagrange
functions $\hatuoh$ and $\hatuth$ from $\dSurf$ to $\Surf$ via the closest
point projection enable us to compare discrete and smooth solutions.
This is necessitated by the geometric difference between the triangle mesh and
the smooth limit surface (since otherwise there would be no way to compare the
two solutions).
Our proof requires certain approximation properties in order for the closest point
projection to yield a bijection.
These requirements are fulfilled, e.g., for \emph{inscribed} meshes as
considered by \citet{Dziuk1988}, i.e., the
case where mesh vertices reside on the smooth surface and the triangles are
uniformly shape regular.
Our setting is more general than the case of inscribed meshes.
We detail our setting in \secref{discretization}.

We denote by \(\Sh\) and \(\Sho\) the finite
element spaces $\hatSh$ and $\hatSho$ lifted to the smooth surface \(\Surf\)
(using the closest point projection $\Psi$), and we denote by
\(\ltpra{\cdot}{\cdot}\) and \(\hopra{\cdot}{\cdot}\) the \(\Lt\) and \(\Hoo\)
inner products lifted from \(\dSurf\) to \(\Surf\).
Using these lifted inner products, \equref{intro:discreteproblemexplicitlymesh} becomes
\begin{equation}\label{eq:intro:discreteproblemmesh}
    \hopra{\uth}{\xi} = \ltpra{f}{\xi} \quad\forall \xi \in \Sho \quad\text{and}\quad
    \hopra{\uoh}{\eta} = \ltpra{\uth}{\eta} \quad \forall
    \eta \in \Sh
    \;\textrm{.}
\end{equation}
However, lifting alone does not suffice for directly proving error estimates for
\((u_1 - \uoh)\) and  \((u_2 - \uth)\).
Indeed, a problem arises from the difference between the metric on the
polyhedral surface (which is piecewise flat) lifted to \(\Surf\)
and the smooth metric \(g\).
This difference implies that the Hilbert spaces \(\left(\Sho, \hopra{\cdot}{\cdot}\right)\) and
\(\left(\Sh, \hopra{\cdot}{\cdot} + \ltpra{\cdot}{\cdot}\right)\) for the discrete solutions are \emph{not} subspaces of the
Hilbert spaces \(\left(\Hoo(\Surf), \hopr{\cdot}{\cdot}\right)\) and
\(\left(\Ho(\Surf), \hopr{\cdot}{\cdot} + \ltpr{\cdot}{\cdot}\right)\) for the smooth solutions since, although the sets are respectively subsets, the metrics differ.

Therefore, we introduce the
following \emph{auxiliary} discrete mixed problem:
find \(\uoht \in \Sho, \; \utht \in \Sh\) such that
\begin{equation}\label{eq:intro:discreteproblemsurface}
    \hopr{\utht}{\xi} = \ltpr{f}{\xi} \quad\forall \xi \in \Sho \quad\text{and}\quad
    \hopr{\uoht}{\eta} = \ltpr{\utht}{\eta} \quad \forall
    \eta \in \Sh
    \;\textrm{,}
\end{equation}
where the inner products are those arising from the smooth metric \(g\). Using this approach, the spaces \(\left(\Sh, \ltpr{\cdot}{\cdot}\right)\) and \(\left(\Sho, \hopr{\cdot}{\cdot}\right)\) are indeed subspaces of the Hilbert spaces \(\left(\Lt(\Surf), \ltpr{\cdot}{\cdot}\right)\) and \(\left(\Hoo(\Surf), \hopr{\cdot}{\cdot}\right)\), respectively. Notice that this problem is only an auxiliary problem for our proof; its operators are never computed in practice.
\\

Considering the auxiliary mixed problem is central to our approach, since it allows us to adapt the proof of \citet[]{Scholz1978}, which treats the case of convergence for the mixed formulation of the biharmonic problem using linear elements for the case of flat domains in \(\Rtwo\). In the planar case, one has  \(\uoh=\uoht\) and \(\uth=\utht\) by construction. \citet{Scholz1978} splits up the proof for the planar case into showing that \(\uth\) converges to \(u_2\) and  that \(\uoh\) converges to \(u_1\).

In the curved case, a similar argument only works to show that \(\utht\) converges to \(u_2\), and that \(\uoht\) converges to \(u_1\). Indeed, in order to bound the error between  \(u_2\) and \(\uth=\utht\)  in the flat setting, Scholz invokes an \(L^\infty\) estimate for the
Ritz projection.
His result relies on a suboptimal bound by \citet{Nitsche1978}.
An application of a later, better bound for the same interpolation error
\cite[]{Rannacher1982} yields the above-mentioned convergence rates in the flat setting. In order to adapt this analysis to the curved setting, we rely on an \(L^\infty\) estimate for the
Ritz projection
provided by \citet{Demlow2009}.
While \citet{Demlow2009} works with inscribed meshes, his result can be adapted to our more general setup, resulting in a bound of the error between \(\utht\) and \(u_2\) when combined with Scholz's approach. 

In our setup, we must consider an additional step in order to bound the error between  \(\uth\) and \(\utht\), which finally leads to a bound on the error between \(\uoh\) and \(u_1\). To this end, we adapt the classical formulation for the mixed biharmonic problem introduced by \citet{Ciarlet1974}, which requires the definition of the following function spaces:
\begin{equation}\begin{split}\label{eq:intro:mixedspacesdefn}
		\ciarlsmooth &:= \{ \tup{v_1}{v_2} \in \Hoo \times \Lt \quad |
        \quad \hopr{v_1}{\mu} = \ltpr{v_2}{\mu} \; \forall \mu \in \Ho \}
        \;\textrm{,} \\
        \ciarldiscronsurf &:= \{ \tup{v_1}{v_2} \in \Sho \times \Shlt \quad |
        \quad \hopr{v_1}{\mu} = \ltpr{v_2}{\mu} \; \forall \mu \in \Sh \}
        \;\textrm{,} \\
        \ciarldiscronmesh &:= \{ \tup{v_1}{v_2} \in \Sho \times \Shlt \quad |
        \quad \hopra{v_1}{\mu} = \ltpra{v_2}{\mu} \; \forall \mu \in \Sh \}
        \;\textrm{,}
    \end{split}\end{equation}
where the space \(\Shlt\) is the space \(\Sh\), but with the \(\Lt\) norm instead of its usual \(\Ho\) norm. Using these spaces, we formulate the mixed biharmonic problem employing the Riesz map that results from an inner product that is different from the spaces' product metric. We detail this construction in \secref{mixedfem}. Notice that the space \(\ciarldiscronsurf\) is absent in the classical formulation of Ciarlet and Raviart, as it corresponds to our auxiliary mixed problem. 

It requires beyond a mere generalization of Scholz's proof to control the error
between \(\uth\) and \(\utht\).
We do so by bounding the
\emph{geometric error} between the function spaces \(\ciarldiscronmesh\) and
\(\ciarldiscronsurf\). While the need for controlling geometric errors is also
present when studying solutions to the Poisson equation on surfaces
\cite[]{Dziuk1988,Wardetzky2006}, these results cannot be directly applied to
our setting.
We bound the difference between elements of the spaces \(\ciarldiscronmesh\) and
\(\ciarldiscronsurf\) by using inverse discrete Laplacians for constructing a map between these function spaces. 
Using inverse discrete
Laplacians introduces an error of order \(h^{-1}\), which we manage to contain
using a geometric error bound  of order \(h^{\frac 32}\), adapted from the work of
\citet{Wardetzky2006}. This yields an error of order \(h^{\frac 12}\), which exactly
corresponds to the error estimate for \(u_2\) given in \citet{Scholz1978} for
the planar case.

\paragraph*{Overview.}
We detail the mixed biharmonic problem of the smooth setting in
\secref{definitions}.
We then continue with the description of discrete function spaces and
differential operators on polyhedral surfaces in \secref{discretization},
where we consider the case of non-inscribed meshes and their relation to smooth
surfaces that they discretize.
In \secref{mixedfem}, we continue with the description of the approach of
\citet{Ciarlet1974} adapted to our setting.
In \secref{convergence} we provide our convergence proof for the mixed
biharmonic problem on curved surfaces.

\section{The Biharmonic Equation on Smooth Surfaces}
\label{sec:definitions}

Let
\(\Surf\)
be a compact smooth surface with smooth boundary or no boundary,
embedded into \(\Rthree\).
We denote by \(\Lp\) the usual \(\Lp\)-spaces on surfaces, and we let 
\(\Wkp\) denote the Sobolev space with \(k\) weak derivatives in \(\Lp\).
We let \(\Hk \defeq W^{k,2}\), and we denote by
\(\Hoo \subseteq \Ho\) the subspace of functions with zero trace along the
boundary (for surfaces with nonempty boundary), or  those functions that have
zero mean (for surfaces without boundary).
Whenever the domain is omitted, these spaces are implied to be defined over
a smooth surface
\(\Surf\).

We denote the metric tensor on
\(\Surf\)
by \(g(\cdot,\cdot)\), i.e., the restriction of the inner product on 
\(\Rthree\) to the tangent spaces of
\(\Surf\).
The metric induces the \(\Lt\) inner product
\(\ltpr{\cdot}{\cdot}\)
and the \(\Hoo\) inner product
\(\hopr{\cdot}{\cdot}\),
\begin{equation}\begin{split}\label{eq:nonDistortedInnerProducts}
	\ltpr{u}{v} &= \int_\Surf u v \;\dx 
	\quad\quad\quad\quad\quad\; u,v \in \Lt \;\textrm{,} \\
	\hopr{u}{v} &= \int_\Surf g\left(\grad u, \grad v\right) \;\dx
	\quad\quad u,v \in \Ho
	\;\textrm{.}
\end{split}\end{equation}
The norm on \(\Ho\) is induced by the inner product
\(\ltpr{\cdot}{\cdot} + \hopr{\cdot}{\cdot}\).

\begin{definition}
	\label{def:biharmeqndefn}
	For \(f \in \Lt\), the \emph{biharmonic equation} is defined
	as follows:
	find \(u \in \Hoo \cap \Hf\), such that
	\begin{equation}
		\Lap^2 u = f
		\;\textrm{,}
	\end{equation}
	where
	\(\Lap\)
	is the positive semidefinite Laplace--Beltrami operator on
	\(\Surf\).
	Additionally,
	\begin{itemize}
		\item if
		\(\Surf\)
		has a boundary, then zero Dirichlet and Neumann
		boundary conditions apply,
		\(u = \frac{\partial u}{\partial\mathbf{n}} = 0\);
		\item if
		\(\Surf\)
		is closed, \(f\) must have zero mean, i.e.,
		\(\int_\Surf f \; \dx  = 0\).
		\\
	\end{itemize}
\end{definition}

The biharmonic equation has a corresponding weak formulation.
	For \(f \in \Lt\), find \(u \in \Hto\) such that
	\begin{equation}\label{eq:weakbiharmonic}
		\int_\Surf \Lap u \Lap v \;\dx
		= \int_\Surf f v \;\dx
		\quad\quad \forall v \in \Hto
		\;\textrm{,}
	\end{equation}
	where \(\Hto\) is the subspace of \(\Ht\) with zero Dirichlet and Neumann  boundary conditions.
	If \(\Surf\) is a closed surface, one additionally requires that
	\(\int_\Surf f \;\dx = 0\), and one looks for \(u\) such that
	\(\int_\Surf u \;\dx = 0\).

We assume that there is a unique solution such that
\(u \in \Hf, \; \norm{u}_\Hf \leq C \norm{f}_\Lt\).
For closed surfaces this follows from the fact that the biharmonic equation
decouples into
two Poisson equations (given that \(f\) integrates to zero), and 
for planar domains, it follows from \citet[Section 2.5.2]{GazzolaPolyharmonic}.
Additionally, we assume the standard existence and regularity estimates for the
Poisson equation: for \(g \in \Lp\), \(1<p<\infty\) there is a unique
\(w \in \Wtp\) with Dirichlet boundary conditions such that, weakly, \(\Lap w = g\)
and \( \norm{w}_\Wtp \leq C \norm{g}_\Lp\).
See, for example,
the work of \citet[Section 2]{grisvard} for planar domains
or \citet{Dziuk2013a} for smooth surfaces.

With \(u_1 \defeq u \), and using the intermediate variable
\(u_2 \defeq \Lap u_1\),
\equref{weakbiharmonic} can be rewritten in its mixed form
\cite[(1.4)]{Monk1987}
\begin{equation}\begin{split}\label{eq:smoothmixedformulation}
	\hopr{u_2}{\xi} &= \ltpr{f}{\xi} \quad\quad \forall \xi \in \Hoo
	\;\textrm{,} \\
	\hopr{u_1}{\eta} &= \ltpr{u_2}{\eta} \quad\quad \forall \eta \in \Ho
	\;\textrm{.}
\end{split}\end{equation}
We refer to this system of equations as the smooth mixed formulation of the biharmonic equation
with Dirichlet and Neumann boundary conditions
(this system was also mentioned in \equref{intro:discreteproblemsurface}).
It can be formulated for any \(u_1 \in \Hoo\) and \(u_2 \in \Ho\).
By \citet[Theorem 7.1.1]{CiarletTheFiniteElementMethod}, \equref{smoothmixedformulation} has a unique
solution such that \(u_1 \in \Hoo \cap \Ht\), so by our assumptions
that $\Surf$ is a smooth surface with smooth boundary
the mixed problem has
a unique solution such that \(u_1 \in \Hoo \cap \Hf\) and \(u_2 \in \Ht\).  
\\

\section{Discretization}
\label{sec:discretization}

\subsection{Discretizing the surface}
\label{sec:discretesurface}

In the discrete setting, we work with a triangulated surface, i.e., a connected topological manifold of dimension two, piecewise consisting of flat triangles.
Boundary edges of triangles along the surface boundary are allowed to be curved
as long as the curve remains in the plane of the triangle.
In the
planar case, where \(\Surf \subseteq \Rtwo\) is a flat surface
embedded in the
plane only (and not, as in our general case, embedded in \(\Rthree\)),
triangle meshes are only needed to discretize the function space
\(\Hoo\) in which the solution lives.
In the case of a surface
\(\Surf \subseteq \Rthree\), however, the mesh is
also used to discretize the geometry itself.
To deal with the error introduced by the discretization, we employ the
setting of \citet{Wardetzky2006}, which we explain in this section.

\begin{figure}
    \includegraphics{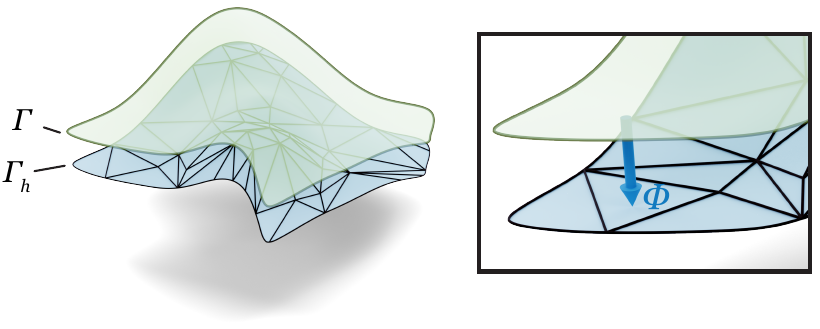}
    \caption{A surface \(\Surf\), in green, and an approximating mesh
    \(\dSurf\), in blue \figloc{left}.
    A close-up of the surface and the mesh with the inverse closest point projection
    \(\Phi = \Psi^{-1}\) between them.
    \label{fig:shortestdistance}}
\end{figure}

\begin{definition}[Reach]
	Let \(X\) be a topologically closed subset of \(\Rthree\).
	The medial axis of \(X\) is the set of those points in \(\Rthree\) that do
	not have a unique closest
	point
	in \(X\).
	The \emph{reach} of \(X\) is
	the distance of \(X\) to its medial axis,
	and we say that an object lies \emph{in the reach} of \(X\) if the object is
	closer to \(X\) than the medial axis of \(X\).
\end{definition}

Let \(\dSurf\)
be a triangle mesh, where all triangles are flat, and interior
triangles have straight edges while boundary triangles are allowed to have
curved edges along the boundary as long as these curved edges remain within the
triangle plane.
Then we can define the following map:

\begin{definition}
\label{def:closestpointprojection}
Let \(\dSurf\) lie within the reach of \(\Surf\).
The \emph{closest point projection} is the map
\(\Psi : \dSurf \rightarrow \Surf \) defined via 
\begin{equation*}
\Psi(q) = \argmin_{p \in \Surf} \norm{q-p}_{\Rthree} \ .
\end{equation*}
If \(\Psi\) is bijective, we define the inverse closest point projection as
\(\Phi = \Psi^{-1}\).
Notice that in this case, \(\Phi\) maps any \(p \in \Surf\) to the closest
intersection of the line through \(p\) parallel to the normal of \(\Surf\) at
\(p\) with \(\dSurf\), see \figref{shortestdistance}.

\end{definition}

Throughout, we require certain conditions of our mesh.
These conditions are \emph{automatically satisfied}
for any shape-regular triangle mesh that is  inscribed into a smooth surface
\(\Surf\) \cite[Section 3.5]{Wardetzky2006}.
Indeed, the following conditions are fulfilled by the setting considered in the
work of \citet{Dziuk1988} and others (with minor modifications at the
boundary).

\begin{enumerate}[label=\textbf{(C\arabic*)}]
	\item\label{cond:shaperegular} The triangles of
	\(\dSurf\)
	are uniformly
	shape regular, i.e., there exist constants \(\kappa, K > 0\) such that every
	triangle contains a circle of radius \(\kappa h\) and is contained in a
	circle of radius \(K h\).

	\item\label{cond:bijmapping} The polyhedral surface
	\(\dSurf\)
	is a \emph{normal graph} over the smooth surface
	\(\Surf\),
	i.e.,
	\(\dSurf\)
	lies within the reach of
	\(\Surf\)
	and the
	closest point projection \(\Psi\)
	is a bijective function.
	In particular, the boundary of
	\(\dSurf\)
	is bijectively mapped to the boundary of
	\(\Surf\), where triangle edges along the boundary of $\dSurf$ are allowed
	to be curved, but must remain in
	the plane of their respective triangle, see \figref{shortestdistance}.

	\item\label{cond:distancebound} The distance of every point under the
	closest point projection
	is bounded by \(C h^\gamma\) for some \(\gamma \geq \frac32\).\footnote{We adopt the convention that, wherever a constant \(C\)
	occurs, the words ``there is a constant \(C > 0\), dependent only on the
	surface \(\Surf\) and mesh regularity parameters'' are implied.}

	\item\label{cond:normalbound}
	Assuming \condref{bijmapping}, the triangle normals of
	\(\dSurf\) approximate the normals of \(\Surf\) in the sense that, at every
	point \(p \in \Surf\) that maps to an interior point  \(\Phi(p)\) of a triangle on \(\dSurf\), the angle between the surface normal of \(\Surf\) at \(p\) and the triangle normal of
	\(\dSurf\) at \(\Phi(p)\) is bounded by \(C h^\varepsilon\) for some
	\(\varepsilon \geq 1\).
\end{enumerate}
\begin{definition}\label{def:sigmadefn}
	We define the approximation parameter of the mesh \(\dSurf\) as
	\(\sigma \defeq \min\left(\gamma, 2\varepsilon\right) \geq \frac32\).
	Our bounds will depend on this combination of the mesh's pointwise
	and normal approximation quality.
\end{definition}

Condition \condref{bijmapping} might seem difficult to satisfy for meshes that have
nonempty boundary, since we require that the boundary of
\(\Surf\)
maps exactly to the boundary of
\(\dSurf\)
under
\(\Phi=\Psi^{-1}\).
However, this condition is similar to the condition of
\citet{Scholz1978} in the flat case:
Consider
a straight-edged triangle mesh within the reach of \(\Surf\). Let 
boundary vertices be inscribed into the boundary of \(\Surf\) such that
every triangle has at most two vertices on the boundary of the mesh.
For every boundary triangle \(T\), replace the straight boundary edge by a
curved edge in the plane of \(T\) such that the closest point projection
\(\Psi\) becomes surjective. This yields a piecewise flat surface \(\dSurf\)
for which condition \condref{bijmapping} is satisfied.
This makes condition \condref{bijmapping} very similar to the condition of
\citet{Scholz1978} in the flat case, which requires triangles with curved
edges that exactly match the boundary of the smooth domain.

\begin{remark}
	\label{rem:wardetzkybdry}
	Similar conditions to \textbf{(C1-C4)} are being used by
	\citet{Wardetzky2006} to show convergence of the finite element
	discretization of the Poisson equation.
	Condition \condref{bijmapping}, however, is not present in the work of
	\citet{Wardetzky2006} as it pertains to bijectivity at the boundary.
	Because of that, Wardetzky's result on the convergence of the finite element
	method for the Poisson equation, Theorem 3.3.3, only holds for solutions
	that are supported sufficiently far away from the boundary.
	With Condition \condref{bijmapping}, and with the finite element spaces that
	will be defined in \defref{femspaces}, the
	estimates of \citet{Wardetzky2006} for the finite element solutions
	of the Poisson equation
	extend from the case of solutions that are compactly supported away from the boundary to the general case of solutions \(u \in \Hoo \cap \Ht\).
	This mirrors similar work on polygonal meshes where
	boundary edges are allowed to be curved, as long as they bijectively
	map to the surface boundary \cite[]{Scholz1978,Demlow2007,Demlow2009}.
\end{remark}

\begin{remark}
	\label{rem:maybestraightboundaries}
	The numerical method described in this work uses triangles whose edges are
	allowed to be curved at the boundary to fulfill the bijectivity constraint
	from \condref{bijmapping}.
	It might be possible to relax condition \condref{bijmapping} to only require
	triangles with straight edges and vertices that are inscribed into the
	boundary.
	However, such a relaxation could lead to lower convergence rates.
	An example of such a construction in the flat case can be found,
	for example, in the work \cite{Monk1987}.
Other alternatives for weakening the bijectivity constraint
	from \condref{bijmapping} are given by the approach of \citet{Elliott2012}
	who employ a piecewise polynomial boundary, or by the approach of
	\citet{Burman2018} who weakly enforce a nonhomogeneous boundary
	condition on straight-edged boundary triangles.
\end{remark}

Using Conditions \textbf{(C1-C4)}, we can relate the metric and the function
spaces of the mesh
\(\dSurf\)
to the metric and the function spaces of
\(\Surf\).
Let \(g\) denote the metric tensor on the smooth surface \(\Surf\).
Notice that the polyhedral surface \(\dSurf\) can be regarded as a so-called
Riemannian cone manifold, see \cite{Troyanov1986}.
Indeed, the surface  \(\dSurf\) carries a Riemannian metric \(g_\dSurf\) that is
flat almost everywhere except at the mesh vertices, which are singularities for
the metric.
Notice in particular that the metric \(g_\dSurf\) is smooth across triangle
edges since any pair of adjacent triangles can be isometrically mapped to the
flat plane;
as a consequence, the metric \(g_\dSurf\)  does not ``see" triangle edges
(they are intrinsically flat). 

We can then use the inverse closest point projection \(\Phi\) in order to pull
back the cone manifold's metric \(g_\dSurf\) from \(\dSurf\)  to \(\Surf\).
This results in a metric \(g_h\) on  \(\Surf\), defined everywhere except at the preimage under \(\Phi\) of edges of \(\dSurf\).
More precisely, we define 
\begin{equation*}
		g_h(X,Y) \defeq g_{\dSurf}(\dd\Phi(X), \dd\Phi(Y))
		= g_{\Rthree} (\dd\Phi(X), \dd\Phi(Y)) \quad\text{a.e.,}
\end{equation*}
where \(X\) and \(Y\) are arbitrary smooth tangential
vector fields on \(\Surf\),
\(\dd\Phi\) is the Jacobian of \(\Phi\),
and where \(g_{\Rthree}\) denotes the standard
Euclidean metric of ambient three space.
Since \(\Gamma_h\) depends on the chosen triangle mesh, and \(\Phi\) depends on
the choice of \(\Gamma_h\), all three expressions depend on \(h\).

\begin{definition}\label{def:metricdefn}
Define the unique matrix field \(A\) on \(\Surf\) that relates
the pulled back metric \(g_h\) to the smooth metric \(g\) almost everywhere.
This matrix field is defined by requiring that
	\begin{equation}
		g_h(X,Y) = g(AX,Y) \quad \text{a.e.}
		\;
	\end{equation}
holds for all smooth vector fields \(X\) and \(Y\) on \(\Surf\).

Consider the  \(\Lt\) and \(\Hoo\) inner products on the polyhedral surface pulled back to the smooth surface \(\Surf\) via \(\Phi\). Using the matrix field \(A\), these inner products can be conveniently be expressed as 
\begin{equation}\label{eq:distortedInnerProducts}\begin{split}
		\ltpra{u}{v} &\defeq \int_\Surf uv \left| \det A \right|^\frac12 \;\dx
		\quad\quad\textrm{for }u,v \in \Lt(\Surf) \ , \\
		\hopra{u}{v} &\defeq \int_\Surf g(A^{-1} \grad u, \grad v)
		\left| \det A \right|^\frac12 \;\dx
		\quad\quad\textrm{for }u,v \in \Ho(\Surf)
		\;\textrm{,}
	\end{split}\end{equation}
respectively.
\end{definition}
We adopt the convention that for every norm, the same norm subscripted
with \(h\) implies that the norm is taken with respect to
the metric \(g_h\) lifted from \(\dSurf\) to \(\Surf\).
For example,
\(\norm{\cdot}_\Lta\)
is the \(\Lt\) norm in the metric \(g_h\).

\begin{remark}
	The discretization described in \secref{discretesurface} follows the
	approach of \citet{Wardetzky2006}, but parallels to some extent the theory
	of \citet{Dziuk1988} (who considers \emph{inscribed} meshes and
	\(\sigma=2\)).
	The metric distortion tensor \(A\) corresponds to a combination
	of \citeauthor{Dziuk1988}'s operators \(P\), \(P_h\), and \(I-\dd H\).
\end{remark}

The significance of using these inner products together with the lifting
defined by \(\Phi\) lies in the fact that this allows us to work on the smooth
surface \(\Surf\), even when considering operations on the polyhedral surface
\(\dSurf\), thus simplifying the comparison between solutions to differential
equations.
We cannot compute finite element operators on \(\Surf\) without numerical
integration, but we can compute them on the piecewise triangular \(\dSurf\)\footnote{with some exceptions at the boundary}.
Consequently, from now on we will exclusively work on the smooth
surface \(\Surf\).
In \secref{mixedfem}, this will allow us to introduce a discrete mixed
finite element problem (with solutions \(\uoh, \uth\)) on the surface
\(\Surf\) that is \emph{equivalent} to the discrete problem (with solutions
\(\hatuoh, \hatuth\)).	It is this new discrete mixed problem on \(\Surf\) with the modified
inner products from \equref{distortedInnerProducts} that allows us to
compute error bounds that also hold for \(\hatuoh, \hatuth\).\\

In order to bound certain geometric errors for our finite element spaces later on, we make use of explicit bounds on the entries of the matrix field \(A\) that describes the pulled back metric \(g_h\) in terms of the smooth metric \(g\).
From now on, the statement ``for small enough \(h\)'' is implied everywhere.
\begin{lemma}\label{lem:boundona}
	It holds that
	\begin{equation*}\begin{split}
		\norm{A - \id}_\Linf &\leq C h^\sigma \;\textrm{,} \\
		\norm{\abs{\det A}^\frac{1}{2} - 1}_\Linf &\leq C h^\sigma
		\;\textrm{,} \\
		\norm{\abs{\det A}^\frac{1}{2} A^{-1} - \id}_\Linf &\leq C h^\sigma
		\;\textrm{,}\\
		\norm{\abs{\det A}^{-\frac{1}{2}} A - \id}_\Linf &\leq C h^\sigma
		\;\textrm{,}
	\end{split}\end{equation*}
	where the \(\Linf\) norm is the essential supremum over the operator norms of the respective matrix fields.
	The scalar \(\sigma > 0\) depends on approximation properties of the mesh and is defined in
	\defref{sigmadefn}.
\end{lemma}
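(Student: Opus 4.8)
The plan is to follow the strategy of \citet{Wardetzky2006}: all three bounds reduce to the single estimate $\norm{A - \id}_\Linf \le Ch^\sigma$ together with the fact that $A$ is symmetric and uniformly positive definite, since for a symmetric matrix field within $Ch^\sigma$ of $\id$ one has $\abs{\det A}^{\frac12} = 1 + O(h^\sigma)$ and, via a Neumann series, $A^{-1} = \id + O(h^\sigma)$; multiplying the last two expansions then yields $\norm{\abs{\det A}^{\frac12} A^{-1} - \id}_\Linf \le Ch^\sigma$. So the task is to estimate $A$, and by \defref{metricdefn} the matrix $A$ at a point $p \in \Omega$ is nothing but the Gram matrix of the differential $\dd\Phi_p \colon T_p\Omega \to \Rthree$ expressed in an orthonormal frame of $T_p\Omega$. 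Hence everything comes down to showing that $\dd\Phi_p$ is close to an isometric embedding.

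To do so, write $\Phi(p) = p + \rho(p)\,\mathbf{n}(p)$, where $\mathbf{n}$ is the unit normal field of $\Omega$ and $\rho \colon \Omega \to \R$ is the signed offset along the normal line; by \condref{distancebound}, $\abs{\rho} \le Ch^\gamma$. On the open, full-measure set of points $p$ whose image under $\Phi$ lies in the interior of a triangle of $\Omega_h$, the map $\Phi$ is smooth, and differentiating the identity above gives, for $X \in T_p\Omega$,
\[
	\dd\Phi_p(X) = X + \bigl(\dd\rho_p(X)\bigr)\,\mathbf{n}(p) + \rho(p)\,\dd\mathbf{n}_p(X)
	\;\textrm{.}
\]
Since $\Omega$ is a fixed smooth compact surface, $\dd\mathbf{n}_p$ (minus the shape operator) maps $T_p\Omega$ into $T_p\Omega$ with norm bounded by a constant, so the last term is tangential and of size $O(h^\gamma)\abs{X}$. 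The only delicate quantity is $\dd\rho_p(X) = \ltpr{\dd\Phi_p(X)}{\mathbf{n}(p)}$, the normal component of $\dd\Phi_p(X)$. The key geometric observation is that $\dd\Phi_p(X)$ lies in the plane of the triangle containing $\Phi(p)$, hence is orthogonal to $\mathbf{n}_{\Omega_h}(\Phi(p))$; since \condref{normalbound} bounds the angle between $\mathbf{n}(p)$ and $\mathbf{n}_{\Omega_h}(\Phi(p))$ by $Ch^\varepsilon$, we obtain $\abs{\dd\rho_p(X)} \le Ch^\varepsilon \abs{\dd\Phi_p(X)}$. Substituting this back into the displayed identity and absorbing the small factor for $h$ small yields both $\abs{\dd\Phi_p(X)} \le 2\abs{X}$ and the refined decomposition $\dd\Phi_p = (\id + E_p) + N_p(\cdot)\,\mathbf{n}(p)$, with $E_p \colon T_p\Omega \to T_p\Omega$ satisfying $\norm{E_p} \le Ch^\gamma$ and with the linear functional $N_p$ satisfying $\abs{N_p(X)} \le Ch^\varepsilon\abs{X}$.

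Because $\mathbf{n}(p) \perp T_p\Omega$, there are no cross terms, so
\[
	g_A(X,Y) = \ltpr{\dd\Phi_p(X)}{\dd\Phi_p(Y)}
	= \ltpr{(\id + E_p)X}{(\id + E_p)Y} + N_p(X)N_p(Y)
	\;\textrm{,}
\]
that is, in an orthonormal frame $A = (\id + E_p)^{T}(\id + E_p)$ plus a rank-one term of operator norm $\abs{N_p}^2 = O(h^{2\varepsilon})$; hence $\norm{A - \id}_\Linf \le C\bigl(h^\gamma + h^{2\varepsilon}\bigr) = Ch^\sigma$ by \condref{sigmabound}. Note that it is precisely the \emph{squaring} of the $O(h^\varepsilon)$ normal component that produces the factor $2\varepsilon$ in the definition of $\sigma$; the $O(h^\varepsilon)$ error in $\dd\Phi_p$ itself does not survive into $A$. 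The remaining two estimates of the lemma now follow from the perturbation bookkeeping of the first paragraph, and all bounds hold in the essential-supremum sense because the exceptional set (points mapping to edges or vertices of $\Omega_h$, or where the normal line fails to be transverse to the relevant triangle plane, which cannot happen for small $h$ by \condref{normalbound}) has measure zero. The main obstacle is the middle step, namely pinning down the normal component $\dd\rho_p(X)$ of $\dd\Phi_p$: the transversality argument built on \condref{normalbound} is the essential device, and once it is in place the rest is linear algebra.
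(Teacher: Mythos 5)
Your argument is correct, but it follows a different route from the paper. The paper's proof is essentially a citation: it invokes Theorem 3.2.1 of Wardetzky, which provides the factorization \(A = PQP\) with \(P = \operatorname{diag}(1-\phi\kappa_1,\,1-\phi\kappa_2)\) and \(Q = \operatorname{diag}\bigl((N\cdot N_h)^{-2},\,1\bigr)\), and then simply Taylor-expands, reading off \(P = \id + O(h^\gamma)\) from \condref{distancebound} and \(Q = \id + O(h^{2\varepsilon})\) from \condref{normalbound}, so that \(\sigma = \min(\gamma,2\varepsilon)\) appears because the normal deviation enters \(Q\) through \(\cos^{-2}\) of the angle. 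You instead rederive the needed structure of \(A\) from scratch: writing \(\Phi = \id + \rho\,\mathbf{n}\), bounding the tangential perturbation by \(Ch^\gamma\) via the shape operator, and pinning down the normal component of \(\dd\Phi\) by the transversality observation that \(\dd\Phi_p(X)\) lies in the triangle plane, which together with \condref{normalbound} gives a normal component of size \(O(h^\varepsilon)\) that contributes to \(A\) only through the rank-one term \(N_p\otimes N_p = O(h^{2\varepsilon})\). This is in effect an elementary, self-contained proof of (the relevant consequence of) Wardetzky's decomposition, and it makes transparent exactly where the exponent \(2\varepsilon\) in \condref{sigmabound} comes from, whereas the paper's version hides that mechanism inside the cited \(Q\) factor; the price is a longer argument and the need to justify smoothness of \(\Phi\) off the preimages of edges, which you handle at the same ``almost everywhere'' level of rigor the paper itself uses. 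Your reduction of the second and third estimates to the first (square-root of the determinant and Neumann series, then the triangle inequality for the product) is routine and matches what the paper dismisses as ``a similar argument.''
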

\begin{proof}
By \citet[Theorem 3.2.1]{Wardetzky2006}, for any point on
	\(\Surf\) where \(A\) is defined and for any orthonormal tangent frame
	there exists a matrix decomposition
	\(A = P Q P\) such that \(P,Q\) can be diagonalized
	(possibly in different bases) as
	\begin{equation*}\begin{split}
		P &= \begin{pmatrix} 1 - \phi\kappa_1 && 0 \\ 0 && 1 - \phi\kappa_2
		\end{pmatrix} \;\textrm{,} \\
		Q &= \begin{pmatrix} \frac{1}{\left(N \cdot N_h \right)^2} && 0 \\
		0 && 1 \end{pmatrix}
		\;\textrm{,}
	\end{split}\end{equation*}
	where \(\phi\) is the pointwise distance between
	\(\Surf\) and \(\dSurf\)
	under the map \(\Phi\), \(N\) and \(N_h\) are the surface
	normals of
	\(\Surf\) and \(\dSurf\)
	respectively,
	and \(\kappa_1, \kappa_2\) are the principal curvatures of
	the surface. Therefore,
	\begin{align*}
		\abs{\det A}^\frac{1}{2}  
		= \frac{|1-\phi\kappa_1| |1- \phi\kappa_2|}{\left|N \cdot N_h \right|} 	
		\approx \frac{|1-\phi\kappa_1- \phi\kappa_2|}{\left|N \cdot N_h \right|}
		\approx \frac{|1-\phi\kappa_1- \phi\kappa_2|}{|1-\angle\left(N, N_h \right)^{2}/2|} 
		\;\textrm{,}
	\end{align*}
	where \(\angle\left(N, N_h \right)\) denotes the unsigned angle between the two vectors
	\(N, N_h\) in \(\Rthree\) and we have dropped higher order terms in $\phi$ and  \(\angle\left(N, N_h \right)\).
	A simple Taylor expansion then gives
	\begin{equation*}
		\left|\abs{\det A}^\frac{1}{2} - 1\right| \approx
		\left|-\phi\kappa_1 - \phi\kappa_2 + \frac{1}{2} \angle\left(N, N_h \right)^2 \right|
		\;\textrm{,}
	\end{equation*}
	which proves the estimate for \(\norm{\abs{\det A}^\frac{1}{2} - 1}_\Linf\), given that
	\(\abs{\phi} \leq Ch^\gamma\) \condref{distancebound},
	\(\abs{\angle\left(N, N_h \right)} \leq Ch^\varepsilon\) \condref{normalbound},
	and \(\sigma = \min\left(\gamma, 2\varepsilon\right)\)
	(\defref{sigmadefn}).
	A similar argument works for the other three expressions.

\end{proof}

For inscribed meshes and \(\sigma=2\),
\lemref{boundona}, parallels the inequality on \(A_h\) found in
\citet[Section 5]{Dziuk1988}.

\begin{lemma}\label{lem:samefunctionspaces}
	Let \(\Lta(\Surf)\) denote the \(\Lt\) space on \(\Surf\) where integration
	happens with the volume element of the lifted metric \(g_h\).
	Let \(\Ho(\Surf), \Hoo(\Surf), \Woinf(\Surf)\) denote the
	\(\Ho, \Hoo, \Woinf\) spaces on \(\Surf\)
	where the gradient is taken with respect to the lifted metric \(g_h\),
	and integration happens with the volume element of the lifted metric
	\(g_h\)

	The following equalities hold as equalities of sets:
	\begin{equation*}\begin{split}
		\Lt(\Surf) &= \Lta(\Surf) \stackrel{\Phi}{=} \Lt(\dSurf) \ ,\\
		\Ho(\Surf) &= \Hoa(\Surf) \stackrel{\Phi}{=} \Ho(\dSurf) \ ,\\
		\Hoo(\Surf) &= \Hooa(\Surf) \stackrel{\Phi}{=} \Hoo(\dSurf) \ ,\\
		\Woinf(\Surf) &= \Woinfa(\Surf) \stackrel{\Phi}{=} \Woinf(\dSurf)
		\;\textrm{,}
	\end{split}\end{equation*}
	where $\stackrel{\Phi}{=}$ denotes the action of \(\Phi\) mapping between
	sets.

	The norms of the respective spaces are all equivalent independently
	of the choice of \(h\) (for \(h\) small enough).
\end{lemma}
\begin{proof}
	Since \(\Phi\) is bijective, every function on \(\Surf\)
	can be uniquely identified with a function on \(\dSurf\), using
	\(\Phi\) to lift functions.

	Under the map \(\Phi\), the inner products
	\(\ltpra{\cdot}{\cdot}\) and \(\hopra{\cdot}{\cdot}\)
	that generate the norms
	of the spaces
	\(\,\Lta(\Surf)\), \(\Hoa(\Surf)\), and \(\Hooa(\Surf)\)
	are exactly the inner products
	that generate the norms
	of the spaces
	\(\Lt(\dSurf)\), \(\Ho(\dSurf)\), and \(\Hoo(\dSurf)\), respectively.
	Then \equref{distortedInnerProducts} and \lemref{boundona} imply the
	respective norm equivalences with
	\(\Lt(\Surf)\), \(\Ho(\Surf)\), and \(\Hoo(\Surf)\).
The pointwise estimates in the proof of \lemref{boundona} imply the
	equivalence of \(\Woinf(\Surf)\) and \(\Woinfa(\Surf)\),
	as well as \(\Woinf(\dSurf)\) under the map \(\Phi\).
\end{proof}

For inscribed meshes and \(\sigma=2\), \lemref{samefunctionspaces} parallels
\citet[3. Lemma]{Dziuk1988}.

\begin{remark}\label{rem:EquivDiscreteNorms}
	The equivalence of norms independently of \(h\) also
	implies a Poincar\'e inequality for the pulled back cone metric norms,
	where the constant is independent of \(h\),
	\begin{equation*}\begin{split}
		\norm{u}_\Lta &\leq C \norm{u}_\Hooa \quad\quad \forall u \in \Hoo
		\;\textrm{,} \\
		\norm{u - u_{\Surf\!\!,\,h}}_\Lta &\leq C \norm{u}_\Hooa \quad\quad
		\forall u \in \Ho
		\;\textrm{,}
	\end{split}\end{equation*}
	where \(u_{\Surf\!\!,\,h}\) is the average of \(u\) with respect to
	integration in the pulled back metric \(g_h\).
	These Poincar\'e inequalities can be derived from the smooth inequalities
	on \(\Gamma\) using \lemref{samefunctionspaces}.
\end{remark}

In order to quantify the differences between
the inner products with respect to the smooth metric \(g\) and the pulled back
metric \(g_h\),
we introduce the following bilinear forms:

\begin{definition}\label{def:differenceforms}
	The difference bilinear forms are defined as follows:
	\begin{equation}\begin{split}
		c(u,v) &\defeq \ltpr{u}{v} - \ltpra{u}{v} \;\textrm{,} \\
		d(u,v) &\defeq \hopr{u}{v} - \hopra{u}{v}
		\;\textrm{.}
	\end{split}\end{equation}
\end{definition}

\begin{lemma}\label{lem:differenceformbounds}
	The difference bilinear forms
	satisfy
	\begin{equation*}\begin{split}
		\abs{c(u,v)} &\leq C h^\sigma \norm{u}_\Lt \norm{v}_\Lt
		\quad\quad \forall u,v \in \Lt \;\textrm{,} \\
		\abs{d(u,v)} &\leq C h^\sigma \norm{u}_\Hoo \norm{v}_\Hoo
		\quad\quad \forall u,v \in \Ho
		\;\textrm{.}
	\end{split}\end{equation*}
\end{lemma}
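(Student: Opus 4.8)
The estimate is essentially a one-line consequence of \lemref{boundona} once each difference form is rewritten as a single integral against a field measuring the deviation of the distorted metric from the flat one. First I would unpack $c$ using \defref{metricdefn}: since $\ltpra{u}{v} = \int_\Omega uv\,\abs{\det A}^{\frac12}\,\dx$, we have
\[
	c(u,v) = \int_\Omega uv\bigl(1 - \abs{\det A}^{\frac12}\bigr)\,\dx .
\]
Bounding the integrand pointwise, pulling out $\norm{\abs{\det A}^{\frac12} - 1}_\Linf \le Ch^\sigma$, and applying Cauchy--Schwarz to $\int_\Omega \abs{u}\abs{v}\,\dx$ gives $\abs{c(u,v)} \le Ch^\sigma \norm{u}_\Lt\norm{v}_\Lt$.

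For $d$, the same strategy works after correctly unpacking the distorted $\Ho$ inner product. From \defref{metricdefn},
\[
	d(u,v) = \int_\Omega g(\nabla u,\nabla v)\,\dx - \int_\Omega g\bigl(A^{-1}\nabla u,\nabla v\bigr)\abs{\det A}^{\frac12}\,\dx
	       = \int_\Omega g\bigl((\id - \abs{\det A}^{\frac12}A^{-1})\nabla u,\nabla v\bigr)\,\dx .
\]
At almost every point I would apply Cauchy--Schwarz with respect to the metric $g$, namely $\abs{g(M\nabla u,\nabla v)} \le \norm{M}_{\mathrm{op}}\,\abs{\nabla u}_g\abs{\nabla v}_g$, where $\norm{M}_{\mathrm{op}}$ is the operator norm of $M$ with respect to $g$ (equal to the Euclidean operator norm in any $g$-orthonormal tangent frame, which is exactly the quantity controlled by \lemref{boundona}). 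The third bound of \lemref{boundona} then gives $\norm{\id - \abs{\det A}^{\frac12}A^{-1}}_{\mathrm{op}} \le Ch^\sigma$ uniformly, and a final Cauchy--Schwarz on $\int_\Omega \abs{\nabla u}_g\abs{\nabla v}_g\,\dx$ yields $\abs{d(u,v)} \le Ch^\sigma\norm{\nabla u}_\Lt\norm{\nabla v}_\Lt$, which is the asserted bound since $\norm{\nabla\cdot}_\Lt$ is controlled by the $\Hoo$ norm.

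There is no genuine obstacle here; it is a routine estimate. The two places that require a little care are: (i) getting the placement of $A^{-1}$ and $\abs{\det A}^{\frac12}$ right when expanding $\hopra{\cdot}{\cdot}$, using that $g_A(X,Y) = g(AX,Y)$ forces the integrand to be $g(A^{-1}\nabla u,\nabla v)\abs{\det A}^{\frac12}$; and (ii) ensuring that the pointwise Cauchy--Schwarz is taken in the Riemannian metric $g$, so that the operator-norm bounds of \lemref{boundona} (which are stated for the matrix fields themselves) apply verbatim, with no metric-distortion factors sneaking in. The $\Linf$ nature of the bounds in \lemref{boundona} also means that the merely almost-everywhere definition of $A$ on $\Omega$ causes no difficulty in the integrals.
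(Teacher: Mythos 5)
Your proof is correct and is exactly the argument the paper intends: the paper's own proof is just the remark that the lemma follows from \lemref{boundona} and Cauchy--Schwarz, and you have filled in precisely those details (including the right placement of $\abs{\det A}^{\frac12}A^{-1}$ so that the third estimate of \lemref{boundona} applies). No issues.
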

\begin{proof}
	This is a consequence of \lemref{boundona} and the Cauchy--Schwarz
	inequality.
\end{proof}

\subsection{Discretizing the function spaces}
\label{sec:discretefunctionspaces}

Having discretized the geometry of \(\Surf\), we now turn to the approximation
of function spaces.

\begin{definition}
	\label{def:femspaces}
	Let \(\hatSh\)
	be the space of continuous functions on the mesh
	\(\dSurf\) that are linear within each triangle.
	On boundary triangles (which might have a curved edge), an isoparametric
	modification is applied \cite[]{Zlamal1973,Scott1973}, which projects a curved
	edge of a triangle onto a straight edge while keeping the vertices fixed and
	minimizing distortion.\footnote{This treatment of the boundary
	does not appear in the work of \citet{Wardetzky2006}.
	The main results of Wardetzky, however, remain true if an optimal isoparametric
	element is chosen \cite[]{Bernardi1989}, with minor modifications to the proofs,
	as long as Conditions \textbf{(C1-C4)} hold, see \remref{wardetzkybdry}.}
	On the resulting triangle, functions are required to be linear.
	\(\hatSho \defeq \hatSh \cap \Hoo(\dSurf) \) is the space of finite element
	functions that evaluate to zero at the boundary.

	Let \(\Sh\) be the lift of the function space \(\hatSh\) under the inverse
	of the closest point projection \(\Phi\).
	The space \(\Sh\) has domain \(\Surf\), and is a subset of \(\Ho\)
	(as a set of functions).
Analogously, we define the discrete space \(\Sho \defeq \Sh \cap \Hoo\).
\end{definition}

\begin{remark}
	Because the inner products \(\ltpra{\cdot}{\cdot}\), \(\hopra{\cdot}{\cdot}\) defined in \equref{distortedInnerProducts} arise from the mesh's
	metric lifted to the surface \(\Surf\), the Hilbert spaces
	\(\left(\Sho, \hopra{\cdot}{\cdot}\right)\),
	\(\left(\Sh, \hopra{\cdot}{\cdot} + \ltpra{\cdot}{\cdot}\right)\) are
	isometric to the corresponding Hilbert spaces generated by \(\hatSho\), \(\hatSh\) (using the metric $g_{\dSurf}$) on \(\dSurf\).

	Using inner products \equref{nonDistortedInnerProducts} derived from the smooth surface's metric $g$ leads to \emph{different}
	Hilbert spaces \(\left(\Sho, \hopr{\cdot}{\cdot}\right)\),
	\(\left(\Sh, \hopr{\cdot}{\cdot} + \ltpr{\cdot}{\cdot}\right)\) that are
	\emph{not} isometric to \(\left(\Sho, \hopra{\cdot}{\cdot}\right)\) and
	\(\left(\Sh, \hopra{\cdot}{\cdot} + \ltpra{\cdot}{\cdot}\right)\).
	This difference will lead us to formulate two different mixed finite
	element problems, \equref{discreteproblemsurface} and
	\equref{discreteproblemmesh}, each using different metrics.
\end{remark}

\begin{remark}
	The number of degrees of freedom of \(\Sh\) is the number of mesh vertices.
	The number of degrees of freedom of \(\Sho\) is the number of mesh vertices
	minus the number of boundary vertices.
\end{remark}

\begin{remark}
	The behavior of the discrete functions spaces on the boundary triangles
	is the same as in the work of \citet{Scholz1978}, who states:
	``Let \(\Sh = \Sh(\Surf)\) be the space of continuous functions which are
	linear in each triangle of \(\dSurf\) with the usual modification for the
	curved elements (\emph{see} \citet{Ciarlet1972,Zlamal1973}).''
	\cite[p.\ 2]{Scholz1978}.
\end{remark}

The discrete spaces come with various interpolation operators. 
\begin{definition}
	Let \(\Ih : \Ht \rightarrow \Sh\)
	denote the per-vertex interpolation operator,\footnote{The Sobolev embedding theorem implies that  \(\Ht(\Surf)\subseteq C^0(\Surf)\), which justifies pointwise interpolation.} i.e., \( (\Ih u)(p) = u(p)\) for all nodes.

	Moreover,
	\(\Rh, \Rha : \Ho \rightarrow \Sh\) and
	\(\Rho, \Rhao : \Hoo \rightarrow \Sho\) are the Ritz projection operators,
	\begin{equation*}\begin{split}
        \hopr{u - \Rh u}{\eta} = \hopra{u - \Rha u}{\eta} = 0 &\quad\quad \forall
        \eta \in \Sh, \; u \in \Ho \\
        \ltpr{u - \Rh u}{1} = \ltpra{u - \Rha u}{1} = 0
        \;\textrm{,}
    \end{split}\end{equation*}
    \begin{equation*}\begin{split}
        \hopr{u - \Rho u}{\xi} = \hopra{u - \Rhao u}{\xi} = 0 &\quad\quad \forall \xi \in \Sho, \; u \in \Hoo 
         \;\textrm{.}
    \end{split}\end{equation*}

    Ritz projections are a widely used tool in finite element analysis
    \cite[]{Rannacher1982,Du2011,Dziuk2013a}.
    They are an important ingredient to show convergence of the discrete
    solution on the surface to the exact solution on the surface.
\end{definition}

Standard arguments together with the equivalence of norms from
    \lemref{samefunctionspaces} yield:

\begin{lemma}\label{lem:ritzinterpstable}
    The Ritz projection is \(\Ho\)-stable, i.e.,
    \begin{equation*}\begin{split}
    \norm{\Rho u}_\Hoo, \norm{\Rhao u}_\Hoo &\leq C \norm{u}_\Hoo
        \quad\quad \forall u \in \Hoo \ , \\
        \norm{\Rh u}_\Ho, \norm{\Rha u}_\Ho &\leq C \norm{u}_\Ho 
        \quad\quad \forall u \in \Ho
        \;\textrm{.}
    \end{split}\end{equation*}
\end{lemma}

Moreover, the interpolation operators satisfy certain interpolation
inequalities.
While these results are classical for flat domains, they require more work in
the curved regime due to the presence of second derivatives
in standard interpolation estimates.

\begin{lemma}\label{lem:interpolationinequalities}
    For \(u \in \Hoo \cap \Ht\) one has
    \begin{equation}\begin{split}\label{eq:interpolationclaims}
    	\norm{u - \Ih u}_{\Hoo} &\leq C h \norm{u}_{\Ht} \ , \\
        \norm{u - \Rho u}_{\Hoo} &\leq C h \norm{u}_{\Ht} \ ,\\
        \norm{u - \Rhao u}_{\Hoo} &\leq C h \norm{u}_{\Ht}
        \;\textrm{.}
    \end{split}\end{equation}
    Analogous results hold for \(\Ho\)-functions that are nonzero at the boundary using
    the appropriate interpolation operators.

    Let \(T = \Psi(\hat{T}) \subset  \Surf\) denote a curved triangle that is the image of a flat triangle \(\hat{T} \subset \dSurf\) under the closest point projection.  Suppose that \(u \in \Hoo \) is continuous
    and lies in \(\Ht(T)\) (resp.
    \(\Wtinf(T)\)) for each such curved triangle \(T\). Then one has
    \begin{equation}\begin{split}\label{eq:interpolationclaimspcw}
    	\norm{u - \Ih u}_{\tHoo} &\leq C h \norm{u}_{\tHt} \ , \\
    	\norm{u - \Ih u}_{\tWoinf} &\leq C h \norm{u}_{\tWtinf}
    	\textrm{,}
    \end{split}\end{equation}
    where the tilde above the norm indicates a per-triangle norm, summed
    over all triangles \(T\) in the triangulation:
    \(\norm{u}_{\tHt}^2 = \sum_{T} \norm{u}_{\Ht(T)}^2\),
    \(\norm{u}_{\tWtinf} = \max_{T} \norm{u}_{\Wtinf(T)}\).
\end{lemma}
\begin{proof}
The estimate in the first line of  \equref{interpolationclaims} follows from the first line of \equref{interpolationclaimspcw}. The estimates in the second and third lines of \equref{interpolationclaims} follow from the first line of \equref{interpolationclaims} and the \(\Ho\)-stability from \lemref{ritzinterpstable}. It thus remains to show  \equref{interpolationclaimspcw}.
    
 For flat triangles, the estimates of  \equref{interpolationclaimspcw} hold by
 classical results (see, e.g., \cite[Theorem 6.4]{BraessFiniteElements} and
 \cite[Theorem 4.4.4]{BrennerScott}).
 In the curved setting, we can use the flat estimates by bounding the curved
 norms by the respective flat norms.
 The left-hand sides of \equref{interpolationclaimspcw}
 can be bounded by their flat counterparts using \lemref{boundona} and the
 definition of the metrics \(g, g_h\).
 Let \(\sigma \defeq u - \Ih u\).
 For almost all \(x \in \Surf\).
 \begin{equation*}\begin{split}
 	\abs{g(\nabla_\Surf\sigma(x), \nabla_\Surf\sigma(x))} &=
 	\abs{g_h(A(x)\nabla_\dSurf\sigma(x), \nabla_\dSurf\sigma(x))} \abs{\det A(x)}^{-\frac{1}{2}} \\
 	&\leq \abs{g_h(\nabla_\dSurf\sigma(x), \nabla_\dSurf\sigma(x))} +
 	\norm{A \abs{\det A}^{-\frac{1}{2}} - \id}_\Linf
 	\abs{g_h(\nabla_\dSurf\sigma(x), \nabla_\dSurf\sigma(x))} \\
 	&\leq
 	C \left( 1 + h^\sigma \right)
 	\abs{g_h(\nabla_\dSurf\sigma(x), \nabla_\dSurf\sigma(x))}
 	\quad\textrm{applying \lemref{boundona}} \\
 	&\leq C \abs{g_h(\nabla_\dSurf\sigma(x), \nabla_\dSurf\sigma(x))}
 	\quad\textrm{for small enough } h
 	\;\textrm{,}
 \end{split}\end{equation*}
 which can be used to bound the left-hand sides with a quantity that can be
 used in the classical flat estimate.

 The right-hand sides require a bit more work as they involve second derivatives
 on curved triangles.
Let \(\abs{\secondderiv u }\) be the norm of the Hessian of \(u\) with respect to the metric on \(\Surf\), and let \(\abs{\secondderiva u }\)
be the norm of the Hessian of \(u\) with respect to the
metric \(g_h\) pulled back from \(\dSurf\) to \(\Surf\).
The proof of Lemma 3.3.1 of \citet{Wardetzky2006} shows that if \(u\) is twice
 classically differentiable, then one has the pointwise estimate
 \begin{equation}\label{eq:HessianEstimate}
        \abs{\secondderiva \, u }^2 \leq
        C\left( \abs{\secondderiv \, u }^2
        + \abs{\grad u}^2\right) 
        \;\textrm{.}
 \end{equation}
 
Then the first line of \equref{interpolationclaimspcw} follows from the corresponding estimate in the flat case together with \equref{HessianEstimate} and the fact that smooth functions are dense in  \(\Ht(T)\).

In order to prove the second estimate in \equref{interpolationclaimspcw},
first recall that for domains that satisfy a cone condition one has
\(W^{1,\infty}= C^{0,1}\), and every Lipschitz function \(u \in C^{0,1}\) is
classically differentiable a.e. (see, e.g.,  \citet[Remark 4.2]{Heinonen2005}).
Applying this result to \(W^{2,\infty}\) shows that every member of
\(W^{2,\infty}\) is twice classically differentiable a.e., which allows for
applying the pointwise estimate \equref{HessianEstimate}. 
\end{proof}
\\

\section{Mixed Finite Elements}
\label{sec:mixedfem}

With the discrete geometry and discrete function spaces in place, we can
now turn towards discretizing the problem \equref{weakbiharmonic}
in its mixed form \equref{smoothmixedformulation}.

Using the two inner products \equref{nonDistortedInnerProducts}  and \equref{distortedInnerProducts} on \(\Surf\) leads to two discrete mixed
problems.
In practice, one solves the discrete problem \equref{intro:discreteproblemexplicitlymesh} on the mesh, which, using the inner products \equref{distortedInnerProducts}, is equivalent to 
\begin{equation}\begin{split}\label{eq:discreteproblemmesh}
    \hopra{\uth}{\xi} &= \ltpra{f}{\xi} \quad\quad \forall \xi \in \Sho
    \;\textrm{,} \\
    \hopra{\uoh}{\eta} &= \ltpra{\uth}{\eta} \quad\quad \forall \eta \in \Sh
    \;\textrm{,}
\end{split}\end{equation}
where \(\uoh \in \Sho\), \(\uth \in \Sh\), and \(f \in \Lt\).
Having formulated an equivalent problem to
\equref{intro:discreteproblemexplicitlymesh} on the surface \(\Surf\) itself,
we can now compare \(\uoh\) to \(u_1\) and \(\uth\) to \(u_2\), and attempt to
derive an error estimate.

We additionally make use of a similar discrete problem with respect to
the inner products \equref{nonDistortedInnerProducts}, i.e., 
\begin{equation}\begin{split}\label{eq:discreteproblemsurface}
    \hopr{\utht}{\xi} &= \ltpr{f}{\xi} \quad\quad \forall \xi \in \Sho
    \;\textrm{,} \\
    \hopr{\uoht}{\eta} &= \ltpr{\utht}{\eta} \quad\quad \forall
    \eta \in \Sh
    \;\textrm{,}
\end{split}\end{equation}
where \(\uoht \in \Sho\), \(\utht \in \Sh\), and \(f \in \Lt\).
This problem is only an auxiliary problem for our proof.
Its operators are never computed in practice.
If the surface has no boundary, the solutions of \equref{discreteproblemsurface}
and \equref{discreteproblemmesh} additionally have to fulfill the
zero mean property from \defref{biharmeqndefn}.

In the planar case of \citet{Scholz1978}, where \(\Surf \subseteq \Rtwo\) is a
planar domain,
the two discrete problems \equref{discreteproblemsurface} and
\equref{discreteproblemmesh} coincide.
\\

Existence and uniqueness for \equref{discreteproblemsurface} and
\equref{discreteproblemmesh} follow from an
argument by \citet[Section 7]{CiarletTheFiniteElementMethod}, which we repeat
here for convenience. 

\begin{definition}
We define the following three Hilbert spaces:
\begin{equation}\begin{split}\label{eq:mixedspacesdefn}
		\ciarlsmooth &:= \{ \tup{v_1}{v_2} \in \Hoo \times \Lt \quad |
        \quad \hopr{v_1}{\mu} = \ltpr{v_2}{\mu} \; \forall \mu \in \Ho \}
        \;\textrm{,} \\
        \ciarldiscronsurf &:= \{ \tup{v_1}{v_2} \in \Sho \times \Shlt \quad |
        \quad \hopr{v_1}{\mu} = \ltpr{v_2}{\mu} \; \forall \mu \in \Sh \}
        \;\textrm{,} \\
        \ciarldiscronmesh &:= \{ \tup{v_1}{v_2} \in \Sho \times \Shlt \quad |
        \quad \hopra{v_1}{\mu} = \ltpra{v_2}{\mu} \; \forall \mu \in \Sh \}
        \;\textrm{,}
    \end{split}\end{equation}
    where the space
    \(\Shlt\)
    is the space \(\Sh\), but with the \(\Lt\) norm
    instead of its usual \(\Ho\) norm.
\end{definition}

As an immediate consequence of Poincar\'{e}'s inequality we obtain that the resulting product norms on these spaces are equivalent to (simpler) norms that we heavily rely on going forward:

\begin{lemma}\label{lem:ciarletspacesnorm}
	The product norms on \(\ciarlsmooth, \ciarldiscronsurf, \ciarldiscronmesh\)
    are equivalent to the norms induced by the inner products
	\begin{equation}\label{eq:EquivNorms}
	\begin{split}
	\big( \tup{v_1}{v_2}, \tup{w_1}{w_2}\big) \; &\mapsto \; \ltpr{v_2}{w_2} \quad
    \text{on \(\ciarlsmooth, \ciarldiscronsurf\)} \;\textrm{,} \\
	\big(\tup{v_1}{v_2}, \tup{w_1}{w_2} \big)\; &\mapsto \; \ltpra{v_2}{w_2} \quad
    \text{on \(\ciarldiscronmesh\)}
        \;\textrm{.}
    \end{split}\end{equation}
\end{lemma}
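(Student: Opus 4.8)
The plan is to notice that the membership constraint in each of the three spaces forces $v_1$ to be a Galerkin (or Ritz) solution of a Poisson-type problem with right-hand side $v_2$, so that $v_1$, and hence the full product norm, is controlled by $v_2$ alone; the reverse bound is immediate because $\norm{v_2}_\Lt$ (resp.\ $\norm{v_2}_\Lta$) is one of the two summands of the product norm.

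First I would treat $V$. Given $\tup{v_1}{v_2}\in V$, I would test the defining identity $\hopr{v_1}{\mu}=\ltpr{v_2}{\mu}$ with the admissible choice $\mu=v_1\in\Hoo\subseteq\Ho$, obtaining $\hopr{v_1}{v_1}=\ltpr{v_2}{v_1}$. By Cauchy--Schwarz and the Poincar\'e inequality this yields $\norm{\nabla v_1}_\Lt\leq C\norm{v_2}_\Lt$, and a second application of Poincar\'e gives $\norm{v_1}_\Hoo\leq C\norm{v_2}_\Lt$; adding $\norm{v_2}_\Lt$ shows the product norm is bounded by $C\norm{v_2}_\Lt$. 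The same argument applies verbatim to $V^h$, using $\mu=v_1\in\Sho\subseteq\Sh$.

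For $V^h_A$ I would repeat this computation with the disturbed forms: testing with $\mu=v_1\in\Sho\subseteq\Sh$ gives $\hopra{v_1}{v_1}=\ltpra{v_2}{v_1}$. Here \lemref{boundona} is used twice — to see that, for $h$ small enough, $\hopra{v_1}{v_1}\geq c\norm{\nabla v_1}_\Lt^2$ with $c$ independent of $h$, and that $\ltpra{v_2}{v_1}\leq C\norm{v_2}_\Lt\norm{v_1}_\Lt$ — so that together with Poincar\'e one again gets $\norm{v_1}_\Hoo\leq C\norm{v_2}_\Lt$. Since \lemref{boundona} also makes $\norm{v_2}_\Lt$ and $\norm{v_2}_\Lta$ equivalent, the product norm on $V^h_A$ is bounded by $C\norm{v_2}_\Lta$, while conversely $\norm{v_2}_\Lta$ is at most a constant times the product norm.

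The argument is elementary, so I do not anticipate a real obstacle; the only delicate point is the disturbed case, where one must invoke \lemref{boundona} to ensure that $\hopra{\cdot}{\cdot}$ is coercive on gradients with a constant that does not deteriorate as $h\to 0$ (legitimate under the running assumption that $h$ is small enough), so that the Poincar\'e-type estimate for $v_1$ stays uniform in the mesh size.
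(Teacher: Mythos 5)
Your proposal is correct and follows essentially the same route as the paper: test the defining constraint with \(\mu = v_1\), apply Cauchy--Schwarz and Poincar\'e to get \(\norm{v_1}_\Hoo \leq C\norm{v_2}_\Lt\) (which also yields the definiteness of the candidate inner product), and handle \(V^h_A\) by the uniform-in-\(h\) equivalence of disturbed and undisturbed norms from \lemref{boundona} (the paper invokes \lemref{samefunctionspaces}, which rests on the same estimate). No gaps; your explicit treatment of the disturbed case is just a spelled-out version of the paper's ``identical derivation'' remark.
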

\begin{proof}
	The symmetric bilinear forms defined by \equref{EquivNorms} are indeed
    positive definite since \(v_2 = 0\) implies \(v_1 = 0\) for all elements
    \(\tup{v_1}{v_2}\) in
    \(\ciarlsmooth, \ciarldiscronsurf, \ciarldiscronmesh\).
    Poincar\'{e}'s inequality implies that
	\begin{equation*}\begin{split}
        \norm{v_1}_\Hoo^2 &= \hopr{v_1}{v_1} = \ltpr{v_2}{v_1} \leq
        \norm{v_2}_\Lt \norm{v_1}_\Lt \leq C \norm{v_2}_\Lt \norm{v_1}_\Hoo
        \;\textrm{.}
    \end{split}\end{equation*}
    Therefore, \(\norm{v_1}_\Hoo \leq C \norm{v_2}_\Lt\), which proves the lemma
    for \(\ciarlsmooth\).
By \remref{EquivDiscreteNorms}, an identical derivation holds for
    \(\ciarldiscronsurf, \ciarldiscronmesh\).
\end{proof}

\begin{definition}
    \label{def:mixedfemfunctionals}
    On
    the linear spaces \(\ciarlsmooth, \ciarldiscronsurf, \ciarldiscronmesh\)
    we can define the functionals
\begin{equation}\begin{split}\label{eq:optimizationprobs}
        \ciarlfunc(\tup{v_1}{v_2}) &:=
        \frac{1}{2}\ltpr{v_2}{v_2} - F(\tup{v_1}{v_2})
        \quad\quad
        \textrm{on } \ciarlsmooth \textrm{ and } \ciarldiscronsurf
        \;\textrm{,} \\
        \ciarlfunca(\tup{v_1}{v_2}) &:=
        \frac{1}{2}\ltpra{v_2}{v_2} - F(\tup{v_1}{v_2})
        \quad\quad
        \textrm{on } \ciarldiscronmesh
        \;\textrm{,}
    \end{split}\end{equation}
    for a dual function
    \(F \in \ciarlsmooth',\)
    \((\ciarldiscronsurf)'\), or \((\ciarldiscronmesh)'\),
    respectively.
The explicit dual functions used to solve the mixed finite element
    problems are introduced in \lemref{uniqueminimizer}.
\end{definition}

As a direct consequence of \lemref{ciarletspacesnorm} and the Riesz
representation theorem,
the functionals from \equref{optimizationprobs} have unique minimizers:
\begin{lemma}[Existence and uniqueness for the mixed biharmonic problem]\label{lem:uniqueminimizer}
The functionals \(\ciarlfunc\) on \(\ciarlsmooth\),  \(\ciarlfunc\) on \(\ciarldiscronsurf\), and \(\ciarlfunca\) on \(\ciarldiscronmesh\) have unique minimizers \(\usmooth\defeq\tup{u_1}{u_2}\), \(\udiscronsurf\defeq\tup{\uoht}{\utht}\), and \(\udiscronmesh\defeq\tup{\uoh}{\uth}\), respectively.

Denoting by \(\rieszsmooth : \ciarlsmooth \rightarrow \ciarlsmooth'\),
\(\rieszdiscronsurf : \ciarldiscronsurf \rightarrow (\ciarldiscronsurf)'\), and
\(\rieszdiscronmesh : \ciarldiscronmesh \rightarrow (\ciarldiscronmesh)'\)
the respective Riesz maps, and using the inner products defined in
\lemref{ciarletspacesnorm}, these minimizers solve the 
mixed biharmonic equation and can be written as
\begin{equation}\begin{split}\label{eq:allrieszops}
	\rieszsmooth \usmooth = \Fsmooth \; , \quad  
	\rieszdiscronsurf \udiscronsurf = \Fdiscronsurf  \; , \quad  
	\rieszdiscronmesh \udiscronmesh = \Fdiscronmesh
	\textrm{,}
\end{split}\end{equation}
for \(\Fsmooth \tup{v_1}{v_2}\defeq\ltpr{v_1}{f}\) on \(\ciarlsmooth\),
\(\Fdiscronsurf \tup{v_1}{v_2}\defeq\ltpr{v_1}{f}\) on \(\ciarldiscronsurf \),
and \(\Fdiscronmesh \tup{v_1}{v_2}\defeq\ltpra{v_1}{f}\) on
\(\ciarldiscronmesh\).
\end{lemma}

\begin{remark}
    \label{rem:noboundonl2}
    \lemref{uniqueminimizer} ensures existence and uniqueness of the mixed
    formulation of the biharmonic equation.
    In particular, if the domain is a smooth surface with smooth boundary, then
    the respective minimizer on \(\ciarlsmooth\) solves the weak biharmonic
    equation \equref{weakbiharmonic}.
    \emph{Notice, however, that the respective minimizer on \(\ciarlsmooth\)
    does not necessarily solve the weak biharmonic equation
    \equref{weakbiharmonic} if the domain does not satisfy appropriate
    regularity conditions}, e.g., if the domain has reentrant corners
    \cite[Section 4.3]{Stylianou2010}.
    This does not pose a problem for the smooth solution \(u_1\), since we work
    with smooth surfaces for which standard regularity estimates hold.
    However, similar issues impact the discrete solutions, since 
    \lemref{uniqueminimizer} can only be used to show
    that the \(\Hoo\) norms of \(\uoht, \uoh\) are bounded independently of
    \(h\), and likewise, that the \(\Lt\) norms of \(\utht, \uth\) are bounded
    independently of \(h\).
    We \emph{cannot} infer boundedness of the \(\Ho\) norms of \(\utht, \uth\)
    independently of \(h\) -- these norms
    can (and in certain cases will)
    blow up as \(h\) decreases.
    We address this issue in the next section.
\end{remark}

\begin{table}[h]
\setlength{\extrarowheight}{6pt}
\centering
\begin{tabular}{L{66pt}|C{76pt}|C{76pt}|C{84pt}}
    name &
    \shortstack{\emph{smooth functions} \\ \emph{on the surface}} &
    \shortstack{\emph{discrete functions} \\ \emph{on the surface}} &
    \shortstack{\emph{lifted discrete functions} \\
    \emph{from the mesh}} \\[14pt]
    defined on &
    \(\Surf\) &
    \(\Surf\) &
    \shortstack{\(\Surf\) \\ (but behaves like \(\dSurf\) \\
    due to metric pullback)} \\[14pt]
    \shortstack[l]{solutions to the \\ biharm.\ equation} &
    \(u_1\), \(u_2\) &
    \(\uoht\), \(\utht\) &
    \(\uoh\), \(\uth\) \\[14pt]
    Set of functions &
    \(\Ho\) &
    \(\Sh\) &
    \(\Sh\) \\[14pt]
    \shortstack[l]{Set of functions \\ (zero at bdry.)} &
    \(\Hoo\) & 
    \(\Sho\) &
    \(\Sho\) \\[14pt]
    \shortstack{\(\Lt\) product \\ \(u,v \in \Lt\)} &
    \(\ltpr{u}{v} = \int_\Surf u v \;\dx \) &
    \(\ltpr{u}{v} = \int_\Surf u v \;\dx \) &
    \(\ltpra{u}{v} = \int_\Surf uv \left| \det A \right|^\frac12 \;\dx\) \\[16pt]
    \shortstack{\(\Ho\) product \\ \(u,v \in \Ho\)} &
    \shortstack{ \(\hopr{u}{v} = \) \\[2pt] \(\int_\Surf g\left(\grad u, \grad v\right) \;\dx\) } &
    \shortstack{ \(\hopr{u}{v} = \) \\[2pt] \(\int_\Surf g\left(\grad u, \grad v\right) \;\dx\) } &
    \shortstack{ \(\hopra{u}{v} = \) \\[-2pt] \({\scriptstyle \int_\Surf g(A^{-1} \grad u, \grad v)
    \left| \det A \right|^\frac12 \;\dx}\) } \\[14pt]
    mixed FEM space &
    \(\ciarlsmooth\) &
    \(\ciarldiscronsurf\) &
    \(\ciarldiscronmesh\) \\[14pt]
    Riesz operator &
    \(\rieszsmooth\) &
    \(\rieszdiscronsurf\) &
    \(\rieszdiscronmesh\)
\end{tabular}
\caption{A summary of the different settings, solutions, and function spaces
used in this article.
\label{tab:spacestable}}\end{table}

We end this section with a table visualizing all of our three settings,
the functions spaces used for each of them, and the solutions that each of
them contains (\tabref{spacestable}).
It is important to note here that \emph{all} functions and function spaces
are defined exclusively on the smooth surface \(\Surf\), making comparisons
between functions from different settings possible.

\section{Convergence of the Numerical Method}
\label{sec:convergence}

It is somewhat surprising that the derivatives of \(\utht, \uth\) appear in the linear systems that we are solving, but the \(\Lt\)-norms of these derivatives 
cannot be bounded independently of \(h\). This indeed complicates the task of bounding errors between solutions of \equref{smoothmixedformulation}, \equref{discreteproblemsurface}, and \equref{discreteproblemmesh}.
\citet{Scholz1978} elegantly solves this issue by utilizing the Ritz projection
in order to cancel contributions of derivatives of \(\utht\).
In the curved case, an argument similar to Scholz's' only serves to show that
\(\utht\) converges to \(u_2\), and \(\uoht\) converges to \(u_1\).
In particular, for the case of curved geometries, one also must account for
the approximation of the curved surface by a piecewise flat surface, i.e., to
show that \(\uth\) converges to \(\utht\).
This is precisely why the curved case is more intricate than the flat one.

\subsection{Convergence of the discrete problem on the mesh to the discrete
problem on the surface}
\label{sec:discreteconvergence}

In this section and
\secref{convonsurface} we treat the case of
surfaces with boundary;
the case of
surfaces without boundary
(treated later) is significantly simpler. \\

So far, our treatment  for the mixed biharmonic problem has considered the
smooth setting alongside the two discrete settings.
The next step, however, only works in the two discrete settings.
We define a discrete Laplace operator that maps into the \(\Lt\)-like
space
\(\Shlt\)
from \(\Sho\).

\begin{lemma}[Discrete Laplacians]\label{lem:DiscreteLap}
	There exist bounded linear and injective operators
	\(\discrlap, \discrlapa : \Sho \rightarrow \Shlt\)
  such that
	\begin{equation}\begin{split}
		\tup{v_1}{\discrlap v_1} \in \ciarldiscronsurf &\quad\quad \forall v_1
		\in \Sho \;\textrm{,} \\
		\tup{v_1}{\discrlapa v_1} \in \ciarldiscronmesh &\quad\quad \forall v_1
		\in \Sho
		\;\textrm{.}
	\end{split}\end{equation}
Moreover, every element in
  \(\ciarldiscronsurf\)
  can be written as the pair
	\(\tup{v_1}{\discrlap v_1}\),
  and every element in
  \(\ciarldiscronmesh\)
  can be written as the pair
  \(\tup{v_1}{\discrlapa v_1}\).
\end{lemma}
\begin{proof}
	We prove the lemma for
  \(\discrlap\);
  the proof for
  \(\discrlapa\)
  is similar.
  For all
  \(\tup{v_1}{v_2} \in \ciarldiscronsurf\)
  we have that
	\begin{equation*}
		\hopr{v_1}{\mu} = \ltpr{v_2}{\mu}
		\quad\quad \forall \mu \in \Sh
		\;\textrm{.}
	\end{equation*}
Written as a discrete linear equation, the right-hand side involves the
	mass matrix \(M\) for
  piecewise linear (except, potentially, on boundary triangles)
  Lagrange finite elements.
	This matrix is invertible. 
	We can thus define
  \(\discrlap  \defeq  M^{-1} \Sigma \),
	where \(\Sigma\) denotes the discrete Laplacian stiffness matrix with columns
  in \(\Sho\) and rows in \(\Sh\).
	The resulting operator is well-defined and linear.
Injectivity follows from the solvability of the Poisson equation.
Indeed,
  \(\hopr{v_1}{\eta} = 0 \; \forall \eta \in \Sho\)
  has a unique solution \(v_1 = 0 \in \Sho\).
  This implies that the discrete Laplacian stiffness matrix
    \(\Sigma\)
  is injective, and hence  \(\discrlap \) is injective.
	
	It remains to show that every element in
  \(\ciarldiscronsurf\)
  can be written as a pair
  \(\tup{v_1}{\discrlap v_1}\).
	In order to see that, let
  \(\tup{v_1}{v_2} \in \ciarldiscronsurf\).
	Then, the definition of
  \(\ciarldiscronsurf\)
  implies that
	\begin{equation*}
		0 = \hopr{0}{\mu} = \ltpr{v_2 - \discrlap v_1}{\mu}
		\quad\quad \forall \mu \in \Sh
		\;\textrm{.}
	\end{equation*}
	Therefore,
  \(\discrlap v_1 = v_2\). 
\end{proof}
\\

\begin{remark}
	The linear operators
  \(\discrlap, \discrlapa\)
  are bounded, as they are discrete operators.
	This bound, however, is \emph{not} independent of \(h\).
\end{remark}

The next result is central for relating solutions from the two discrete spaces.
\begin{lemma}[Inverse estimate]\label{lem:thdiffs}
	We have that
	\begin{equation*}
		\norm{\discrlap v_1 - \discrlapa v_1}_\Lt \leq C h^{\sigma-1}
		\norm{v_1}_\Hoo
		\quad\quad \forall v_1 \in \Sho
		\;\textrm{,}
	\end{equation*}
	where the constant \(C\) is independent of \(h\),
  and where \(\sigma\) was defined in \defref{sigmadefn}.
\end{lemma}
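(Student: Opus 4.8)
The plan is to estimate $w \defeq T^h v_1 - T^h_A v_1 \in \Sh_\Lt$ directly by testing the defining relations of the two discrete Laplacians against a suitably chosen element of $\Sh$. By \lemref{DiscreteLap}, for every $\mu \in \Sh$ we have $\hopr{v_1}{\mu} = \ltpr{T^h v_1}{\mu}$ and $\hopra{v_1}{\mu} = \ltpra{T^h_A v_1}{\mu}$. Subtracting and inserting the difference bilinear forms $c$ and $d$ from \lemref{differenceformbounds}, we obtain, for all $\mu \in \Sh$,
\begin{equation*}
	\ltpr{T^h v_1 - T^h_A v_1}{\mu}
	= d(v_1,\mu) - c(T^h_A v_1, \mu) \ ,
\end{equation*}
after using $\ltpr{T^h_A v_1}{\mu} = \ltpra{T^h_A v_1}{\mu} + c(T^h_A v_1,\mu)$ and $\hopra{v_1}{\mu} = \hopr{v_1}{\mu} - d(v_1,\mu)$. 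Now take $\mu = w = T^h v_1 - T^h_A v_1$, which is a legitimate test function since $w \in \Sh$. Then $\norm{w}_\Lt^2 = d(v_1, w) - c(T^h_A v_1, w)$, and \lemref{differenceformbounds} gives
\begin{equation*}
	\norm{w}_\Lt^2 \leq C h^\sigma \norm{v_1}_\Hoo \norm{w}_\Hoo
	+ C h^\sigma \norm{T^h_A v_1}_\Lt \norm{w}_\Lt \ .
\end{equation*}

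The second term is harmless: $T^h_A v_1 = M_A^{-1} S_A v_1$ corresponds to the discrete Laplacian in the disturbed metric, and testing its defining relation against $\mu = T^h_A v_1$ together with the disturbed Poincar\'e inequality yields $\norm{T^h_A v_1}_\Lta \leq C \norm{v_1}_\Hooa$, hence $\norm{T^h_A v_1}_\Lt \leq C \norm{v_1}_\Hoo$ by the $h$-independent norm equivalence of \lemref{samefunctionspaces}. So that term already contributes $C h^\sigma \norm{v_1}_\Hoo \norm{w}_\Lt$, which after dividing once by $\norm{w}_\Lt$ is of order $h^\sigma \norm{v_1}_\Hoo$ — better than needed. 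The real work is in the first term, where the factor $\norm{w}_\Hoo$ appears instead of $\norm{w}_\Lt$. Here is where the inverse estimate enters: since $w \in \Sh$ is a piecewise linear finite element function, the standard inverse inequality on shape-regular triangulations (valid per triangle and then summed, using \condref{shaperegular}, and transferable to $\Omega$ via \lemref{samefunctionspaces}) gives $\norm{w}_\Hoo \leq C h^{-1} \norm{w}_\Lt$. Substituting this, dividing through by $\norm{w}_\Lt$, and combining the two contributions yields $\norm{w}_\Lt \leq C h^{\sigma - 1} \norm{v_1}_\Hoo$, which is the claim.

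The main obstacle is making sure the inverse inequality $\norm{w}_\Hoo \leq C h^{-1}\norm{w}_\Lt$ is actually available in this curved setting with an $h$-independent constant, including on the isoparametrically modified boundary triangles. This follows because $\Sh$ is the pullback under the bijection $\Phi$ of the genuinely piecewise linear space $S_{h,A}$ on $\Omega_h$, where the classical inverse estimate holds triangle-by-triangle under \condref{shaperegular}; \lemref{boundona} and \lemref{samefunctionspaces} then transfer it to $\Sh \subseteq \Ho(\Omega)$ with only an $h$-independent distortion of the constant. The boundary triangles require the same minor isoparametric bookkeeping already invoked in \lemref{interpolationinequalities}. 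One should also double-check that all constants hidden in $c$, $d$, and in the bound on $\norm{T^h_A v_1}_\Lt$ genuinely depend only on $\Omega$ and the mesh regularity parameters, so that the final $C$ is $h$-independent as asserted.
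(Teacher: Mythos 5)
Your overall route is the paper's route: the same identity $\ltpr{T^h v_1 - T^h_A v_1}{\mu} = d(v_1,\mu) - c(T^h_A v_1,\mu)$, the same choice $\mu = w$, \lemref{differenceformbounds}, and the standard inverse estimate for piecewise linear functions to absorb the $\norm{w}_\Hoo$ factor. However, there is one genuine flaw: your claimed intermediate bound $\norm{T^h_A v_1}_\Lta \leq C \norm{v_1}_\Hooa$ with $C$ independent of $h$ is false, and the justification via the ``disturbed Poincar\'e inequality'' runs backwards. Testing the defining relation with $\mu = T^h_A v_1$ gives $\norm{T^h_A v_1}_\Lta^2 = \hopra{v_1}{T^h_A v_1} \leq \norm{v_1}_\Hooa \norm{T^h_A v_1}_\Hoa$, and Poincar\'e bounds an $\Lt$-type norm by an $\Ho$-type seminorm, not the reverse; to replace $\norm{T^h_A v_1}_\Hoa$ by $\norm{T^h_A v_1}_\Lta$ you must invoke the inverse estimate, which costs a factor $h^{-1}$. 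The $h^{-1}$ is genuinely there: $T^h_A$ is a discrete Laplacian (mass-matrix inverse times stiffness matrix), and its $\Hoo\to\Lt$ operator norm scales like $h^{-1}$ (take $v_1$ a single hat function to see this); the paper accordingly proves only $\norm{T^h_A v_1}_\Lt \leq C h^{-1}\norm{v_1}_\Hoo$ and explicitly remarks that the bound on $T^h_A$ is not uniform in $h$.

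Fortunately this error does not sink the lemma: with the correct bound the second term becomes $C h^{\sigma}\,h^{-1}\norm{v_1}_\Hoo\norm{w}_\Lt = C h^{\sigma-1}\norm{v_1}_\Hoo\norm{w}_\Lt$, which after dividing by $\norm{w}_\Lt$ is exactly (not better than) the rate you need, and the first term is handled by the inverse estimate on $w$ precisely as you wrote. So your proof is repaired by replacing the ``Poincar\'e'' step with the chain $\norm{T^h_A v_1}_\Lta^2 = \hopra{v_1}{T^h_A v_1} \leq C\norm{v_1}_\Hoo\norm{T^h_A v_1}_\Hoo \leq C h^{-1}\norm{v_1}_\Hoo\norm{T^h_A v_1}_\Lt$, after which it coincides with the paper's argument. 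Your remarks on transferring the inverse inequality to $\Sh$ on the curved surface via $\Phi$, \lemref{boundona}, and the isoparametric boundary triangles are sound and consistent with the paper's framework.
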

\begin{proof}
	For
  \(\mu \in \Shlt\) and \(c,d\) as defined in \defref{differenceforms},
  it holds that
	\begin{equation*}\begin{split}
		\ltpr{\discrlap v_1 - \discrlapa v_1}{\mu} &= \ltpr{\discrlap v_1}{\mu}
		- \ltpra{\discrlapa v_1}{\mu} - c(\discrlapa v_1, \mu)
		= \hopr{v_1}{\mu} - \hopra{v_1}{\mu} - c(\discrlapa v_1, \mu) \\
		&= d(v_1,\mu) - c(\discrlapa v_1, \mu)
		\leq Ch^\sigma \norm{v_1}_\Hoo \norm{\mu}_\Hoo
		+ Ch^\sigma \norm{\discrlapa v_1}_\Lt \norm{\mu}_\Lt
		\;\textrm{.}
	\end{split}\end{equation*}
Using the standard inverse estimate, we have that
	\(\norm{\discrlapa v_1}_\Hoo \leq Ch^{-1} \norm{\discrlapa v_1}_\Lt\)
  \cite[II 6.8]{BraessFiniteElements}.
	Therefore,
	\begin{equation*}\begin{split}
		\norm{\discrlapa v_1}_\Lt^2 & \leq
		C \norm{\discrlapa v_1}_\Lta^2 = C \hopra{v_1}{\discrlapa v_1}
		\leq C h^{-1} \norm{v_1}_\Hoo \norm{\discrlapa v_1}_\Lt
		\;\textrm{,} 
\end{split}\end{equation*}
	which proves the lemma
  after applying another inverse estimate to \(\mu\). 
	\end{proof}
\\

Using the maps
\(\discrlap, \discrlapa\)
allows for constructing a map that relates the two spaces
\(\ciarldiscronsurf\) and \(\ciarldiscronmesh\).
\begin{lemma}\label{lem:DefW}
	Let
  \(\discrspacesmap : \ciarldiscronsurf \rightarrow \ciarldiscronmesh\)
	be the linear map such that
	\begin{equation}\label{eq:def-Wh}
		\discrspacesmap \left(\tup{v_1}{\discrlap v_1}\right) =
		\tup{v_1}{\discrlapa v_1}
		\;\textrm{.}	
	\end{equation}

  \(\discrspacesmap\)
  is well-defined, bounded independently of \(h\), invertible, and the inverse
  is also bounded independently of \(h\).
  Additionally,
  \begin{equation}\label{eq:lhbdbylhwave}
    \norm{(\discrspacesmap)^* \ \discrspacesmap - \id} \leq
    C h^{\sigma-1}
    \;\textrm{,}
  \end{equation}
  where \(\left(\discrspacesmap\right)^*\) is the linear adjoint of
  \(\discrspacesmap\).
\end{lemma}
\begin{proof}
	Well-definedness follows from the fact that every element in
  \(\ciarldiscronsurf\)
  can uniquely be written as a pair
	\(\tup{v_1}{\discrlap v_1}\),
  and every element in
  \(\ciarldiscronmesh\)
  can uniquely be written as a pair
  \(\tup{v_1}{\discrlapa v_1}\), see \lemref{DiscreteLap}.
  \(\discrspacesmap\)
  is invertible, as the inverse mapping is given by
	\(\tup{v_1}{\discrlapa v_1} \mapsto \tup{v_1}{\discrlap v_1}\).

	We now show
  boundedness of \(\discrspacesmap\); 
	a similar argument works to show boundedness of  the inverse.
	Let
  \(\tup{v_1}{\discrlap v_1} \in \ciarldiscronsurf\).
  Then
  \begin{equation*}\begin{split}
    \norm{\discrspacesmap\left(\tup{v_1}{\discrlap v_1}\right)} &=
    \norm{\discrlapa v_1}_\Lta
    \leq C \norm{\discrlapa v_1}_\Lt \leq C \norm{\discrlap v_1}_\Lt +
    C \norm{\discrlapa v_1 - \discrlap v_1}_\Lt \\
    &\leq C \norm{\discrlap v_1}_\Lt + C h^{\sigma-1} \norm{v_1}_\Hoo
    \textrm{,}
  \end{split}\end{equation*}
  where the last inequality follows from \lemref{thdiffs}.
  Using that \(\sigma > 1\) and the equivalence of norms from
  \lemref{ciarletspacesnorm},
  \(\discrspacesmap\) is bounded independently of \(h\).
  \\

We have 
  \begin{equation*}
    \norm{(\discrspacesmap)^* \ \discrspacesmap - \id}
  = \sup_{\substack{\tup{v_1}{\discrlap v_1} \in \ciarldiscronsurf }}\frac{1}{\norm{\discrlap v_1}_{\Lt}^{2}}
  \abs{\ltpra{\discrlapa v_1}{\discrlapa v_1}
    - \ltpr{\discrlap v_1}{\discrlap v_1}}
    \;\textrm{,}
  \end{equation*}
  where, again, we have used the equivalence of norms from
  \lemref{ciarletspacesnorm}.
Now,
  \begin{equation}\begin{split}\label{eq:bcwstarwproof}
    \abs{\ltpra{\discrlapa v_1}{\discrlapa v_1}
    \!\! - \ltpr{\discrlap v_1}{\discrlap v_1}}
&= \abs{\ltpra{\discrlapa v_1}{\discrlapa v_1 - \discrlap v_1}
     + \ltpra{\discrlapa v_1}{\discrlap v_1}
    \!\! - \ltpr{\discrlap v_1}{\discrlap v_1}} \\
    &= \abs{\ltpra{\discrlapa v_1}{\discrlapa v_1 - \discrlap v_1}
    + \hopra{v_1}{\discrlap v_1}
    - \hopr{v_1}{\discrlap v_1}} \\
    &\leq \norm{\discrlapa v_1}_\Lta \norm{\discrlapa v_1 - \discrlap v_1}_\Lta
    + \abs{d(v_1, \discrlap v_1)}
    \;\textrm{,}
  \end{split}\end{equation}
  where \(d\) is the bilinear difference form from \defref{differenceforms}, and
  where we used the definition of \(\discrlap, \discrlapa\) from
  \lemref{DiscreteLap}.
By the equivalence of norms from \lemref{samefunctionspaces}, 
  \lemref{thdiffs}, and \equref{lhbdbylhwave},
  \begin{equation}\label{eq:firstsubpartwstarwproof}
    \norm{\discrlapa v_1}_\Lta \norm{\discrlapa v_1 - \discrlap v_1}_\Lta
    \leq C\norm{\discrlapa v_1}_\Lta \norm{\discrlapa v_1 - \discrlap v_1}_\Lt
    \leq Ch^{\sigma-1} \norm{\discrlap v_1}_\Lt \norm{v_1}_\Hoo
    \;\textrm{.}
  \end{equation}
  By \lemref{differenceformbounds} as well as the standard inverse estimate
  for discrete Lagrangian functions that states
  \(\norm{w}_\Hoo \leq Ch^{-1} \norm{w}_\Lt \;\; \forall w \in \Sh\)
  \cite[II 6.8]{BraessFiniteElements},
  \begin{equation}\label{eq:secondsubpartwstarwproof}
    \abs{d(v_1, \discrlap v_1)} \leq
    Ch^{\sigma} \norm{v_1}_\Hoo \norm{\discrlap v_1}_\Hoo
    \leq Ch^{\sigma-1} \norm{v_1}_\Hoo \norm{\discrlap v_1}_\Lt
    \;\textrm{.}
  \end{equation}
  Substituting \equref{firstsubpartwstarwproof} and
  \equref{secondsubpartwstarwproof} into \equref{bcwstarwproof} and applying
  \lemref{ciarletspacesnorm} then gives
  \begin{equation*}\begin{split}
    \abs{\ltpra{\discrlapa v_1}{\discrlapa v_1}
    - \ltpr{\discrlap v_1}{\discrlap v_1}}
    &\leq Ch^{\sigma-1} \norm{\discrlap v_1}_\Lt^2
    \;\textrm{,}
  \end{split}\end{equation*}
  which proves the lemma.
\end{proof}\\

We denote by
\((\discrspacesmap)' : (\ciarldiscronmesh)' \rightarrow (\ciarldiscronsurf)'\)
the dual operator to
\(\discrspacesmap\).
\((\discrspacesmap)'\)
is bounded independently of \(h\), it is invertible, and its inverse is bounded
independently of \(h\).
This is true by the same argument as the one for \(\discrspacesmap\).
If \(f: \ciarldiscronmesh \rightarrow \R \),
\begin{equation}\begin{split}
  \left( (\discrspacesmap)'(f) \right) \left(\tup{v_1}{\discrlap v_1}\right)
  &= (f \circ \discrspacesmap) \left(\tup{v_1}{\discrlap v_1}\right)
  = f \left( \tup{v_1}{\discrlapa v_1} \right) , \\
  \abs{(\discrspacesmap)'(f) \left(\tup{v_1}{\discrlap v_1}\right)}
  &\leq \norm{f}_{(\ciarldiscronmesh)'} \norm{\discrlapa v_1}_\Lt
  \quad\textrm{by \lemref{ciarletspacesnorm}} \\
  &\leq \norm{f}_{(\ciarldiscronmesh)'} \norm{\discrlap v_1}_\Lt
  \quad\textrm{by \lemref{thdiffs}} \\
  &\leq \norm{f}_{(\ciarldiscronmesh)'} \norm{\tup{v_1}{\discrlap v_1}}_{\ciarldiscronsurf}
  \quad\textrm{by \lemref{ciarletspacesnorm}}
  \;\textrm{.}
\end{split}\end{equation}
The operators
\(\discrspacesmap\), \((\discrspacesmap)'\), and \((\discrspacesmap)^*\)
provide the tool for relating the two discrete problems.
\\

We mention a lemma that is true for maps between Hilbert
spaces in general, independent of our particular setting:

\begin{lemma}\label{lem:rieszoperatordiffbound}
  Let \(\Lambda : X \rightarrow Y\) be a bijective bounded linear map between
  Hilbert spaces, and let \(R_X: X\rightarrow X'\), \(R_Y: Y \rightarrow Y'\) be the respective Riesz maps.  Let \(\alpha \in X'\) and  \(\beta \in Y'\) be given. Let \(\bar{x} \in X\) be such that \(R_X(\bar{x}) = \alpha\) and  let \(\bar{y} \in Y\) be such that \(R_Y(\bar{y}) = \beta\). Then
\[
    \norm{\Lambda\bar{x} - \bar{y}} \leq
    \norm{\id - \Lambda^*\Lambda} \norm{\alpha} \norm{\Lambda^{-1}}
    + \norm{\beta - (\Lambda')^{-1}\alpha}
    \;\textrm{.}
  \]
\end{lemma}

 \begin{proof}
  Let \(y \in Y\) arbitrary such that \(\norm{y}=1\), and let
  \(x \defeq \Lambda^{-1}y\).
  Then
  \begin{equation*}\begin{split}
  \hilbertpr{\Lambda\bar{x} - \bar{y}}{y} &=
    \hilbertpr{\Lambda\bar{x}}{y} - \hilbertpr{\bar{y}}{y}
= \hilbertpr{\Lambda\bar{x}}{\Lambda x} - \hilbertpr{\bar{x}}{x}
    + \alpha(x) - \beta(y) \\
&\leq \norm{\id - \Lambda^*\Lambda}\norm{\bar{x}}\norm{\Lambda^{-1}y}
    + \norm{\beta - (\Lambda')^{-1}\alpha}\norm{y}
    \textrm{,}
  \end{split}\end{equation*}
  which proves the lemma, since \(\norm{\bar{x}} = \norm{\alpha}\), and
  \(y\) was arbitrary with norm \(1\).
\end{proof}\\

We can now bound the error between
the solutions of our two discrete problems from \lemref{uniqueminimizer}.
\begin{lemma}\label{lem:discreteproblemdiff}
  Consider the following two linear problems for
  \(\Fdiscronsurf \in (\ciarldiscronsurf)'\),
  \(\Fdiscronmesh \in (\ciarldiscronmesh)'\),
  \begin{equation*}\begin{split}
      \rieszdiscronsurf \udiscronsurf = \Fdiscronsurf \quad \text{and}\quad
      \rieszdiscronmesh \udiscronmesh = \Fdiscronmesh
      \;\textrm{.}
  \end{split}\end{equation*}

  Then we have that
  \(\;\;
      \norm{\discrspacesmap \udiscronsurf - \udiscronmesh}_{\ciarldiscronmesh}
      \leq C h^{\sigma-1} \norm{\Fdiscronsurf}_{(\ciarldiscronsurf)'}
      + C\norm{\Fdiscronmesh - ((\discrspacesmap)')^{-1} \Fdiscronsurf}_{(\ciarldiscronmesh)'}
      \;\textrm{.}
  \)
\end{lemma}
\begin{proof}
  Combining \lemref{DefW} and \lemref{rieszoperatordiffbound}
  with \(X = \ciarldiscronsurf, Y = \ciarldiscronmesh\),
  the statement immediately follows.
\end{proof}
\\

The main result of this section relates the two discrete systems
\equref{discreteproblemsurface} and \equref{discreteproblemmesh} that we use
in our mixed finite element method.

\begin{theorem}\label{thm:discreteproblemdiff}
  Let \(\uoht, \utht\) solve problem \equref{discreteproblemsurface},
  and let \(\uoh, \uth\) solve problem \equref{discreteproblemmesh}.
  Then 
  \begin{equation*}
      \norm{\uth - \utht}_\Lt \leq C h^{\sigma-1} \norm{f}_\Lt
      \;\textrm{.}
  \end{equation*}
\end{theorem}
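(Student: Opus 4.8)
The plan is to assemble the machinery built up in the preceding lemmas; essentially no new estimate is needed. Set \(u^h = \tup{\uoht}{\utht} \in V^h\) and \(u^h_A = \tup{\uoh}{\uth} \in V^h_A\). By \lemref{DiscreteLap} we have \(\utht = T^h \uoht\) and \(\uth = T^h_A \uoh\), and by \lemref{linearsystemasops} these tuples solve \(L^h u^h = F^h\) and \(L^h_A u^h_A = F^h_A\) with \(F^h\tup{v_1}{v_2} = \ltpr{v_1}{f}\) and \(F^h_A\tup{v_1}{v_2} = \ltpra{v_1}{f}\). Applying \lemref{discreteproblemdiff} directly yields
\begin{equation*}
	\norm{W u^h - u^h_A} \leq C h^{\sigma-1} \norm{F^h_A} + C \norm{F^h - W' F^h_A}
	\;\textrm{,}
\end{equation*}
so the first task is to bound the two dual norms on the right-hand side.

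For \(\norm{F^h_A}\), the norm on \(V^h_A\) is equivalent to \(\norm{v_2}_\Lta\) by \lemref{ciarletspacesnorm}, and by Poincar\'e's inequality together with \lemref{samefunctionspaces} one gets \(\abs{\ltpra{v_1}{f}} \leq \norm{v_1}_\Lta \norm{f}_\Lta \leq C \norm{f}_\Lt \norm{v_2}_\Lt\), hence \(\norm{F^h_A} \leq C \norm{f}_\Lt\). For \(\norm{F^h - W' F^h_A}\), every element of \(V^h\) is of the form \(\tup{v_1}{T^h v_1}\) and \(W\tup{v_1}{T^h v_1} = \tup{v_1}{T^h_A v_1}\), so \((F^h - W' F^h_A)\tup{v_1}{T^h v_1} = \ltpr{v_1}{f} - \ltpra{v_1}{f} = c(v_1, f)\), which by \lemref{differenceformbounds} is at most \(C h^\sigma \norm{v_1}_\Lt \norm{f}_\Lt\); testing the defining relation of \(V^h\) with \(\mu = v_1\) gives \(\norm{v_1}_\Hoo^2 = \ltpr{T^h v_1}{v_1} \leq \norm{T^h v_1}_\Lt \norm{v_1}_\Lt\), so by Poincar\'e \(\norm{v_1}_\Lt \leq C \norm{T^h v_1}_\Lt\), which is comparable to \(\norm{\tup{v_1}{T^h v_1}}\). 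Thus \(\norm{F^h - W' F^h_A} \leq C h^\sigma \norm{f}_\Lt\), and since \(h^\sigma \leq h^{\sigma-1}\) for small \(h\) we obtain \(\norm{W u^h - u^h_A} \leq C h^{\sigma-1} \norm{f}_\Lt\).

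It remains to pass from this to a bound on \(\norm{\uth - \utht}_\Lt\). Since \(W u^h = \tup{\uoht}{T^h_A \uoht}\) and \(u^h_A = \tup{\uoh}{T^h_A \uoh}\), and the norm on \(V^h_A\) is equivalent to the \(\Lt\)-norm of the second component, \(\norm{T^h_A(\uoht - \uoh)}_\Lt \leq C \norm{W u^h - u^h_A} \leq C h^{\sigma-1}\norm{f}_\Lt\). Now decompose
\begin{equation*}
	\uth - \utht = T^h_A \uoh - T^h \uoht = T^h_A(\uoh - \uoht) + \left(T^h_A - T^h\right)\uoht
	\;\textrm{.}
\end{equation*}
The first summand is controlled by the bound just obtained. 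For the second, \lemref{thdiffs} gives \(\norm{(T^h - T^h_A)\uoht}_\Lt \leq C h^{\sigma-1}\norm{\uoht}_\Hoo\), and \(\norm{\uoht}_\Hoo \leq C\norm{f}_\Lt\): testing the Euler--Lagrange equation for \(u^h\) with \(\tup{\uoht}{\utht}\) yields \(\norm{\utht}_\Lt^2 = \ltpr{\uoht}{f} \leq \norm{\uoht}_\Lt\norm{f}_\Lt \leq C\norm{\utht}_\Lt\norm{f}_\Lt\) by \lemref{ciarletspacesnorm} and Poincar\'e, so \(\norm{\utht}_\Lt \leq C\norm{f}_\Lt\) and then \(\norm{\uoht}_\Hoo \leq C\norm{\utht}_\Lt \leq C\norm{f}_\Lt\). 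Adding the two estimates gives the claim.

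I expect essentially all the genuine difficulty to reside in \lemref{thdiffs}---the inverse estimate, which is precisely where one power of \(h\) is lost, turning the \(h^\sigma\) bounds on the difference bilinear forms into the \(h^{\sigma-1}\) rate in the statement---and in \lemref{discreteproblemdiff}, which already packages the operator-theoretic bookkeeping. The one point in the present argument requiring care is the decomposition of \(\uth - \utht\) above: because the discrete Laplacians \(T^h, T^h_A\) are \emph{not} bounded uniformly in \(h\), they must never be estimated in isolation, but only through their difference (via \lemref{thdiffs}) or through their action as measured with the \(h\)-independent norm equivalences on \(V^h\) and \(V^h_A\).
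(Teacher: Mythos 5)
Your proposal is correct and follows essentially the same route as the paper: the same decomposition \(\uth - \utht = T^h_A(\uoh - \uoht) + (T^h_A - T^h)\uoht\), with the first term controlled via \lemref{discreteproblemdiff} and the norm equivalences of \lemref{ciarletspacesnorm}, the second via the inverse estimate \lemref{thdiffs} together with the \(h\)-independent stability bound \(\norm{\uoht}_\Hoo \leq C\norm{f}_\Lt\), and the identical bounds \(\norm{F^h_A}\leq C\norm{f}_\Lt\) and \(\norm{F^h - W'F^h_A}\leq Ch^\sigma\norm{f}_\Lt\) on the right-hand sides. The only difference is presentational (you bound the dual norms before the decomposition and spell out the stability of \(\uoht\) explicitly, where the paper invokes \lemref{linearsystemasops} and \(\norm{F^h}\)), so no further comment is needed.
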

\begin{proof}
  Let
  \(\udiscronsurf = \tup{\uoht}{\utht}\),
  \(\udiscronmesh = \tup{\uoh}{\uth}\),
  \(\Fdiscronsurf(\tup{v_1}{v_2}) = \ltpr{f}{v_1}\),
  \(\Fdiscronmesh(\tup{v_1}{v_2}) = \ltpra{f}{v_1}\).
  Then
  \begin{equation*}\begin{split}
      \norm{\uth - \utht}_\Lt
      &= \norm{\discrlapa \uoh - \discrlap \uoht}_\Lt
      \leq \norm{\discrlapa \uoh - \discrlapa \uoht}_\Lt 
      + \norm{\discrlapa \uoht - \discrlap \uoht}_\Lt \\
      &\leq C
      \norm{\udiscronmesh - \discrspacesmap \udiscronsurf}_{\ciarldiscronmesh} +
      \norm{\discrlapa \uoht - \discrlap \uoht}_\Lt
      \quad\text{(by \equref{def-Wh})}  \\
&\leq C
      \norm{\udiscronmesh - \discrspacesmap \udiscronsurf}_{\ciarldiscronmesh} +
      Ch^{\sigma-1}\norm{\uoht}_\Hoo \quad\text{(by \lemref{thdiffs})}  \\
      &\leq C
      \norm{\udiscronmesh - \discrspacesmap \udiscronsurf}_{\ciarldiscronmesh} +
      Ch^{\sigma-1}\norm{\Fdiscronsurf}_{(\ciarldiscronsurf)'}
      \quad\text{(by \lemref{ciarletspacesnorm})}\\&\leq 
      C\norm{\Fdiscronmesh - ((\discrspacesmap)')^{-1} \Fdiscronsurf}_{(\ciarldiscronmesh)'} +
      C h^{\sigma-1} \norm{\Fdiscronsurf}_{(\ciarldiscronsurf)'}
      \quad\text{(by \lemref{discreteproblemdiff})}
      \;\textrm{.}
  \end{split}\end{equation*}

  It remains to deal with the right-hand sides of the last inequality.
  By the equivalence of norms,
  \(\norm{\Fdiscronsurf}_{(\ciarldiscronsurf)'} \leq C \norm{f}_\Lt\).
  For
  \(\tup{v_1}{\discrlapa v_1} \in \ciarldiscronmesh\)
  we have
  \begin{equation*}\begin{split}
    \left( \Fdiscronmesh - ((\discrspacesmap)')^{-1} \Fdiscronsurf \right)
      \left(\tup{v_1}{\discrlapa v_1}\right) &=
      \ltpra{f}{v_1} - \ltpr{f}{v_1} = c(f, v_1) \;\textrm{,}
      \quad \text{and hence,}\\
      \abs{\left( \Fdiscronmesh - ((\discrspacesmap)')^{-1} \Fdiscronsurf \right)
      \left(\tup{v_1}{\discrlapa v_1}\right)}
      &\leq C h^{\sigma} \norm{f}_\Lt \norm{v_1}_\Lt
      \quad\text{(by \lemref{ciarletspacesnorm})}  \\
      &\leq C h^{\sigma} \norm{f}_\Lt \norm{\discrlapa v_1}_\Lt
       \;\textrm{,} \\
      \norm{\Fdiscronmesh - ((\discrspacesmap)')^{-1} \Fdiscronsurf}_{(\ciarldiscronmesh)'}
       &\leq
      C h^{\sigma} \norm{f}_\Lt
      \;\textrm{.}
  \end{split}\end{equation*}
  This proves the result.
\end{proof}

\subsection{Convergence of the discrete problem on the surface to the exact
solution}
\label{sec:convonsurface}

Having successfully bounded the error between the discrete problem on the mesh
(with solution \(\tup{\uoh}{\uth}\)) and the discrete problem on the surface
(with solution \(\tup{\uoht}{\utht}\)), we move on to bounding the error between
\(\tup{\uoht}{\utht}\) and the exact solution, \(\tup{u_1}{u_2}\).
Our proof follows the roadmap laid  out by \citet{Scholz1978}.  However, we require considerable adjustments to extend this approach to 
curved surfaces. \\

We start with an extension of Scholz's Lemma to
curved surfaces,
using a theorem by \citet{Demlow2009} in place of
Scholz's use of a result by \citet{Nitsche1978}.
Demlow works in the setting of inscribed meshes of \citet{Dziuk1988} and adapts
a result by \citet{Schatz1998} from the flat setting.
Demlow's analysis can be adapted to our setting fulfilling
conditions \textbf{(C1-C4)} by using \lemref{samefunctionspaces}, \lemref{differenceformbounds}, and
\lemref{interpolationinequalities}.
Alternatively, one could also consider generalizing the \( \Linf\) estimate of
\citet{Rannacher1982} (who offer an improved Nitsche-type bound) to our setting.
For details, we refer to the work of \citet[Section 5.2.3]{Stein2020}.\begin{lemma}[Scholz's Lemma]\label{lem:scholzlemma}
	Let \(u \in \Hoo \cap \Wtinf\).
	Let \(\eta \in \Sh\).
	Then
	\begin{equation*}
		\abs{\hopr{u - \Rho u}{\eta}} \leq C \sqrt{h} \norm{u}_\Wtinf
		\norm{\eta}_\Lt
		\;\textrm{.}
	\end{equation*}
\end{lemma}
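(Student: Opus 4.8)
The plan is to exploit the fact that the Ritz projection $\Rho$ annihilates $\hopr{u-\Rho u}{\xi}$ for all $\xi\in\Sho$, and to use this to replace $\eta$ by $\eta-\Iho\eta$ (or a suitable interpolant) so that only the interpolation error of $\eta$ enters. First I would write $\hopr{u-\Rho u}{\eta}=\hopr{u-\Rho u}{\eta-\Ih\eta}$ using the defining orthogonality of $\Rho$ against elements of $\Sho$; here one must be careful that $\eta\in\Sh$ need not vanish on the boundary, so $\Ih\eta$ is the appropriate interpolant of $\eta$ lying in $\Sho$ (or one adapts the boundary treatment exactly as in \citet{Scholz1978}). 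Then by Cauchy--Schwarz on each triangle,
\begin{equation*}
	\abs{\hopr{u-\Rho u}{\eta}}
	\le \norm{u-\Rho u}_{\tWoinf}\,\norm{\eta-\Ih\eta}_{\tHoo}\cdot(\text{measure factor}),
\end{equation*}
and the idea is to pay $\sqrt h$ from the $\Woinf$-in-$x$ versus $\Ho$-in-$x$ mismatch. More precisely, I would bound $\abs{\hopr{u-\Rho u}{\eta}}\le \norm{\nabla(u-\Rho u)}_{\Linf}\,\norm{\nabla(\eta-\Ih\eta)}_{\Lo}$, then use the area bound $\norm{v}_{\Lo}\le C\,\abs{\Omega}^{1/2}\norm{v}_{\Lt}$ together with a per-triangle inverse/interpolation estimate $\norm{\nabla(\eta-\Ih\eta)}_{\Lt(t)}\le C\norm{\nabla\eta}_{\Lt(t)}$ — but this alone gives an $O(1)$, not $O(\sqrt h)$, bound, so the gain must come from a sharper local estimate.

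The correct route (this is the heart of Scholz's argument) is to localize. Split $\Omega$ into the union of triangles and write $\hopr{u-\Rho u}{\eta}=\sum_t\int_t g(\nabla(u-\Rho u),\nabla\eta)\,\dx$. On each triangle $\nabla\eta$ is essentially constant (up to curvature corrections of size $O(h^\varepsilon)$, controlled via \lemref{boundona}), so $\int_t g(\nabla(u-\Rho u),\nabla\eta)\,\dx = \nabla\eta\big|_t\cdot\int_t\nabla(u-\Rho u)\,\dx + (\text{curvature remainder})$. The quantity $\int_t\nabla(u-\Rho u)\,\dx$ is a triangle average of the gradient error, which by \corref{generalizedrannacher} is pointwise $O(h\norm{u}_\Wtinf)$, so $\big|\int_t\nabla(u-\Rho u)\,\dx\big|\le C h\,\abs{t}\,\norm{u}_\Wtinf\le C h\cdot h^2\norm{u}_\Wtinf$. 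Summing, $\sum_t\abs{\nabla\eta|_t}\,Ch^3\norm{u}_\Wtinf$, and then by Cauchy--Schwarz over triangles $\sum_t\abs{\nabla\eta|_t}\le (\#\text{triangles})^{1/2}(\sum_t\abs{\nabla\eta|_t}^2)^{1/2}\le C h^{-1}\,h^{-1}\norm{\eta}_\Lt = C h^{-2}\norm{\eta}_\Lt$ (using $\#\text{triangles}=O(h^{-2})$, $\abs{t}=O(h^2)$, and the inverse estimate relating the per-triangle constant $\abs{\nabla\eta|_t}$ to $\norm{\eta}_{\Lt(t)}$). Combining, $Ch^3\cdot h^{-2}=Ch$ — which is in fact better than claimed; the loss to $\sqrt h$ comes from boundary triangles with curved edges (the isoparametric correction) and from the curvature remainder terms, where the per-triangle cancellation $\int_t\nabla(u-\Rho u)$ is no longer available in full and one only recovers $O(h^{1/2})$ globally. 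I would follow \citet[p.\ 86]{Scholz1978} for the decomposition of the triangle sum into interior and boundary contributions, handling the boundary strip (of width $O(h)$, hence area $O(h)$) by a crude Cauchy--Schwarz that contributes the $\sqrt h$, and the interior by the cancellation argument above; the curvature remainders are absorbed using \lemref{boundona} and \condref{normalbound}.

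The main obstacle is the interior cancellation step on curved triangles: unlike the flat case, $u-\Rho u$ restricted to a triangle is not affine minus affine, and $\nabla\eta$ is not exactly constant, so the clean identity $\int_t g(\nabla(u-\Rho u),\nabla\eta)=\nabla\eta|_t\cdot\int_t\nabla(u-\Rho u)$ acquires remainder terms. I expect these to be controllable by expanding the metric $g$ and the FEM basis functions in local coordinates and invoking the $O(h^\sigma)$ metric-distortion bounds of \lemref{boundona} together with the $\Woinf$-bound $\norm{\Rho u}_\Woinf\le C\norm{u}_\Woinf$ from \thmref{linfboundedness}, but verifying that every remainder is genuinely lower-order and that nothing degrades below $\sqrt h$ is the delicate bookkeeping the proof must carry out.
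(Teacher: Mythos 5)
There is a genuine gap: your ``interior cancellation'' step does not actually produce any gain, and the mechanism that the paper uses to kill the interior is missing from your argument. Writing $\int_t g(\nabla(u-\Rho u),\nabla\eta)\,\dx = \nabla\eta\big|_t\cdot\int_t\nabla(u-\Rho u)\,\dx + \dots$ and then bounding $\abs{\int_t\nabla(u-\Rho u)\,\dx}\leq C h\,\abs{t}\,\norm{u}_\Wtinf$ uses only the pointwise bound from \corref{generalizedrannacher}; no oscillation of $\nabla(u-\Rho u)$ is exploited, so this is the trivial estimate in disguise. Your bookkeeping then contains an arithmetic slip: for piecewise linear $\eta$ one has $\abs{\nabla\eta|_t}\leq C h^{-2}\norm{\eta}_{\Lt(t)}$ (one factor $h^{-1}$ from the inverse estimate, one from $\abs{t}^{-1/2}$), not $h^{-1}$, so $\sum_t\abs{\nabla\eta|_t}\leq C h^{-3}\norm{\eta}_\Lt$ and your interior total is $C h^3\cdot h^{-3}=O(1)$, not $O(h)$. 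Summed over all $\sim h^{-2}$ triangles the argument therefore proves nothing beyond boundedness. The missing idea is the one step you mention in passing and then abandon: choose $\xi\in\Sho$ interpolating $\eta$ at the \emph{interior} vertices (and zero at boundary vertices), use the Ritz orthogonality $\hopr{u-\Rho u}{\xi}=0$ to replace $\eta$ by $\varphi=\eta-\xi$, and observe that $\varphi$ vanishes identically on every triangle not touching the boundary. The whole pairing then reduces to the $O(h^{-1})$ boundary triangles, where the paper combines the triangle area bound $Ch^2$, the inverse estimates $\norm{\varphi}_{\Woinf(t)}\leq Ch^{-1}\norm{\varphi}_{\Linf(t)}$ and $\norm{\eta}_{\Linf(t)}\leq Ch^{-1}\norm{\eta}_{\Lt(t)}$, a Cauchy--Schwarz over the $\sim h^{-1}$ boundary triangles (cost $h^{-1/2}$), and finally $\norm{u-\Rho u}_\Woinf\leq Ch\norm{u}_\Wtinf$ from \corref{generalizedrannacher}, yielding $C\sqrt{h}$.

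Relatedly, your diagnosis of where the $\sqrt h$ comes from is off: it is not caused by curved boundary triangles, isoparametric corrections, or metric distortion (those are handled elsewhere, via \lemref{boundona} and the surface version of the Rannacher estimate). The loss to $\sqrt{h}$ is present already in Scholz's flat argument and stems purely from the fact that $\eta\in\Sh$ need not vanish on the boundary, so the correction $\varphi$ lives on a boundary strip of $\sim h^{-1}$ triangles; the Cauchy--Schwarz over that strip is exactly the $h^{-1/2}$ that reduces the interior-grade $O(h)$ to $O(\sqrt h)$. Also note that your first display cannot be taken literally: if $\Ih$ denotes the vertexwise interpolant onto $\Sh$, then $\Ih\eta=\eta$ for $\eta\in\Sh$ and the right-hand side vanishes; the correct object is precisely the interior-vertex interpolant $\xi\in\Sho$ described above.
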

\begin{proof}
	Let \(\xi \in \Sho\) interpolate \(\eta\) on all interior vertices of the
	mesh.
	Let \(\varphi \defeq \eta - \xi\).

	By the definition of Ritz projection, we have that
	\begin{equation*}\begin{split}
		\hopr{u - \Rho u}{\eta} = \hopr{u - \Rho u}{\varphi}
		\;\textrm{.}
	\end{split}\end{equation*}
	As \(\varphi\) is only supported on the boundary triangles
	\(\mathcal{T}_\partial\),
	the last equation can be simplified to
	\begin{equation*}\begin{split}
		\abs{ \hopr{u - \Rho u}{\eta} }
		&= \abs{\sum_{T \in \mathcal{T}_\partial} \int_T
		\grad\left(u - \Rho u\right) \cdot
		\grad\varphi \;\dx}
		\leq C h^2 \norm{u - \Rho u}_\Woinf \sum_{T \in \mathcal{T}_\partial}
		\norm{\varphi}_{\Woinf(T)}
		\;\textrm{,}
	\end{split}\end{equation*}
	where we used the fact that the area of a triangle is bounded by \(C h^2\),
	where the \(C\) depends on the triangle regularity constants.

	By the standard inverse estimate we can conclude that
	\(\norm{\varphi}_{\Woinf(T)} \leq C h^{-1} \norm{\varphi}_{\Linf(T)} \).
	By definition,
	\(\norm{\varphi}_{\Linf(T)}  \leq  C \norm{\eta}_{\Linf(T)}\).
	Moreover, using a per-triangle calculation, we obtain that
	\(\norm{\eta}_{\Linf(T)} \leq C h^{-1} \norm{\eta}_{\Lt(T)} \).
	Thus we conclude
	\begin{equation*}\begin{split}
		\abs{ \hopr{u - \Rho u}{\eta} }
		&\leq C \norm{u - \Rho u}_\Woinf \sum_{T \in \mathcal{T}_\partial}
		\norm{\eta}_{\Lt(T)}
		\leq C h^{-\frac{1}{2}} \norm{u - \Rho u}_\Woinf \norm{\eta}_\Lt
		\;\textrm{,}
	\end{split}\end{equation*}
	where we used the fact that the number of triangles in
	\(\mathcal{T}_\partial\)
	is \(\sim h^{-1}\).

	The estimate by \citet[Theorem 3.2]{Demlow2009} (which carries over to our
	setting with non-inscribed triangle meshes with minor modifications) states
	that
	\begin{equation*}\begin{split}
		\norm{\Rho v}_\Woinf
		\leq C \norm{v}_\Woinf \quad \text{for all \(v \in \Woinf \)}
		\;\textrm{.}
	\end{split}\end{equation*}
	Together with \lemref{interpolationinequalities}, this leads to
    \begin{equation*}\begin{split}
        \norm{u - \Rho u }_\Woinf &\leq
        \norm{u - I_h u }_\Woinf +
        \norm{I_h u - \Rho u }_\Woinf
        = \norm{u - I_h u }_\Woinf +
        \norm{\Rho \left(I_h u - u \right)}_\Woinf
        \\
        &\leq \norm{u - I_h u}_\Woinf +
        C \norm{I_h u - u }_\Woinf
        \leq C h \norm{u}_\Wtinf
        \;\textrm{,}
    \end{split}\end{equation*}
    which proves the lemma.
\end{proof}
\\

Using this lemma we can now estimate the error in \(u_2\).
This mirrors the first part of Theorem 1 by \citet{Scholz1978}, but we
achieve a bound of order \(\sqrt{h}\) instead of Scholz's
\(\sqrt{h} \abs{\log h}^2\) due to the improved \lemref{scholzlemma}.

\begin{theorem}\label{thm:u2error}
Let \(u_1, u_2\) solve the smooth mixed biharmonic problem \equref{smoothmixedformulation}, and let \(\uoh, \uth\) solve the discrete mixed biharmonic problem \equref{discreteproblemmesh} on the mesh. Then one has
	\begin{equation*}
		\norm{u_2 - \uth}_\Lt \leq C \sqrt{h} \norm{f}_\Lt.
	\end{equation*}
\end{theorem}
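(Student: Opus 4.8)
The plan is to follow Scholz's roadmap, first reducing to the auxiliary discrete problem \equref{discreteproblemsurface} on the surface. By \thmref{discreteproblemdiff} we have $\norm{\uth - \utht}_\Lt \leq C h^{\sigma-1}\norm{f}_\Lt$, and since $\sigma \geq \frac32$ by \condref{sigmabound} the exponent satisfies $\sigma - 1 \geq \frac12$, so $h^{\sigma-1}\leq\sqrt h$ for small $h$. Hence by the triangle inequality it suffices to prove $\norm{u_2 - \utht}_\Lt \leq C\sqrt h\norm{f}_\Lt$, where $\uoht,\utht$ solve \equref{discreteproblemsurface}. The standing regularity assumptions give $u_1 \in \Hf$ and $\norm{u_2}_\Ht \leq C\norm{f}_\Lt$, and Sobolev embedding on the $2$-manifold $\Omega$ yields $u_1 \in \Wtinf$ with $\norm{u_1}_\Wtinf \leq C\norm{u_1}_\Hf \leq C\norm{f}_\Lt$; in particular $u_1 \in \Hoo \cap \Wtinf$, which is what \lemref{scholzlemma} will require.

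Next I would split $u_2 - \utht = (u_2 - \Rh u_2) + \theta$ with $\theta \defeq \Rh u_2 - \utht \in \Sh$; the first summand is harmless since $\norm{u_2 - \Rh u_2}_\Lt \leq \norm{u_2 - \Rh u_2}_\Ho \leq C h \norm{u_2}_\Ht \leq Ch\norm{f}_\Lt$ by \lemref{interpolationinequalities}. To bound $\norm{\theta}_\Lt$, I would test the second equation of \equref{smoothmixedformulation} and the second equation of \equref{discreteproblemsurface} with $\theta \in \Sh \subseteq \Ho$, obtaining $\ltpr{u_2}{\theta} = \hopr{u_1}{\theta}$ and $\ltpr{\utht}{\theta} = \hopr{\uoht}{\theta}$, so that
\[
    \norm{\theta}_\Lt^2 = \ltpr{\Rh u_2 - \utht}{\theta} = \hopr{u_1 - \uoht}{\theta} + \ltpr{\Rh u_2 - u_2}{\theta},
\]
where the last term is at most $Ch\norm{f}_\Lt\norm{\theta}_\Lt$.

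The crux is the term $\hopr{u_1 - \uoht}{\theta}$, which I would split as $\hopr{u_1 - \Rho u_1}{\theta} + \hopr{\Rho u_1 - \uoht}{\theta}$ with $\Rho u_1 - \uoht \in \Sho$. The second piece vanishes: subtracting the first equations of \equref{smoothmixedformulation} and \equref{discreteproblemsurface} gives $\hopr{u_2 - \utht}{\xi} = 0$ for all $\xi \in \Sho$, hence $\hopr{\utht}{\Rho u_1 - \uoht} = \hopr{u_2}{\Rho u_1 - \uoht}$; combined with the Ritz property $\hopr{u_2 - \Rh u_2}{\eta} = 0$ for all $\eta \in \Sh$ (applied to $\eta = \Rho u_1 - \uoht \in \Sho \subseteq \Sh$) this yields $\hopr{\Rho u_1 - \uoht}{\theta} = \hopr{\Rho u_1 - \uoht}{\Rh u_2 - u_2} = 0$. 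The first piece is exactly what Scholz's Lemma controls: \lemref{scholzlemma}, which rests on the generalized Rannacher estimate \corref{generalizedrannacher}, bounds $\abs{\hopr{u_1 - \Rho u_1}{\theta}} \leq C\sqrt h\norm{u_1}_\Wtinf\norm{\theta}_\Lt \leq C\sqrt h\norm{f}_\Lt\norm{\theta}_\Lt$. Combining the three estimates gives $\norm{\theta}_\Lt \leq C\sqrt h\norm{f}_\Lt$, and together with the interpolation bound and the reduction from the first paragraph this proves the theorem. The only real subtlety here is the orthogonality cancellation $\hopr{\Rho u_1 - \uoht}{\theta} = 0$ — the device that makes the method converge despite the failure of the inf--sup conditions, and for which it is essential that $\theta$ be built from the $\Sh$-Ritz projection of $u_2$ rather than from an interpolant; the genuinely hard analytic work, namely Scholz's Lemma and the $\Linf$ Ritz estimate on curved surfaces, has already been carried out in \secref{ritzlinf}, so the present argument is essentially bookkeeping on top of those results.
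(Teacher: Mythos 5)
Your proposal is correct and follows essentially the same route as the paper's proof: the reduction from \(\uth\) to \(\utht\) via \thmref{discreteproblemdiff} and \(\sigma\geq\frac32\), testing with \(\Rh u_2 - \utht\), the orthogonality cancellation \(\hopr{\Rho u_1 - \uoht}{\utht - \Rh u_2}=0\) (which the paper states up front and you derive by splitting \(\hopr{u_1-\uoht}{\theta}\)), and the final appeal to \lemref{scholzlemma} together with \(\norm{u_1}_\Wtinf \leq C\norm{f}_\Lt\) and the interpolation estimate. The only difference is the order in which the same algebraic identities are arranged, so the two arguments coincide in substance.
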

\begin{proof}
	Using \lemref{interpolationinequalities}, we obtain that
	\begin{equation}\label{eq:reducedprobtoritz1}
		\norm{\utht - u_2}_\Lt
		\leq \norm{\utht - \Rh u_2}_\Lt + \norm{\Rh u_2 - u_2}_\Lt
		\leq \norm{\utht - \Rh u_2}_\Lt + \,C h \norm{u_2}_\Ht
		\;\textrm{.}
	\end{equation}
	
	Using \thmref{discreteproblemdiff} (together with the fact that
	\(\sigma \geq \frac 32\)) and \equref{reducedprobtoritz1}, we have
	\begin{equation}\label{eq:reduceprobtodiscronsurf}
		\norm{u_2 - \uth}_\Lt \leq
		\norm{\utht - u_2}_\Lt +\norm{\utht - \uth}_\Lt
		\leq \norm{\utht - \Rh u_2}_\Lt + \,C h \norm{u_2}_\Ht+ \,C \sqrt{h} \norm{f}_\Lt
		\;\textrm{.}
	\end{equation}
	Using \equref{reduceprobtodiscronsurf} and the regularity estimate for the smooth problem, we thus obtain
	\begin{equation}\label{eq:reducedprobtoritz2}
		\norm{u_2 - \uth}_\Lt
		\leq \norm{\utht - \Rh u_2}_\Lt + \,C \sqrt{h} \norm{f}_\Lt
		\;\textrm{.}
	\end{equation}

	It remains to bound $\norm{\utht - \Rh u_2}_\Lt$.
	To this end, we first note that
	\begin{equation*}\begin{split}
		\hopr{\uoht - \Rho u_1}{\utht - \Rh u_2}
		&= \hopr{\uoht - \Rho u_1}{\utht - u_2}
		= 0
		\;\textrm{,}
	\end{split}\end{equation*}
	using the definition of the Ritz projection for the first equality and using
	the smooth and discrete formulations of the mixed biharmonic problems for
	the second equality.

	Thus we can compute
	\begin{equation*}\begin{split}
		\norm{\utht - \Rh u_2}_\Lt^2 &= \ltpr{\utht - \Rh u_2}{\utht - \Rh u_2}
		- \hopr{\uoht - \Rho u_1}{\utht - \Rh u_2} \\
		&= \ltpr{u_2 - \Rh u_2}{\utht - \Rh u_2}
		+ \hopr{\Rho u_1 - u_1}{\utht - \Rh u_2}\\
		&\leq \norm{u_2 - \Rh u_2}_{\Lt}  \norm{\utht - \Rh u_2}_{\Lt}
		+ \hopr{\Rho u_1 - u_1}{\utht - \Rh u_2}
		\;\textrm{,}
	\end{split}\end{equation*}
	where we again used the (smooth and discrete) formulations of the mixed
	biharmonic problems. 
	The first of the two summands can be estimated using the estimates for
	the Ritz projection from \equref{interpolationclaims}.
	The second summand is covered by \lemref{scholzlemma} and the
	fact that \(\norm{u_1}_\Wtinf \leq C \norm{u_1}_\Hf \leq C \norm{f}_\Lt\). 
	Division by \(\norm{\utht - \Rh u_2}_\Lt\) then
	gives
	\begin{equation*}
		\norm{\utht - \Rh u_2}_\Lt \leq C \sqrt{h} \norm{f}_\Lt
		\;\textrm{.}
	\end{equation*}
	Together with \equref{reducedprobtoritz2} this
	proves the theorem.
\end{proof}
\\

It remains to compute the error in \(u_1\).
The next theorem follows the second part of Theorem 1 by
\citet{Scholz1978}, but requires additional work due to the curved geometries.
Because of \lemref{scholzlemma}, we achieve convergence of order
\(h\) here.

\begin{theorem}\label{thm:u1error}
	We have that
	\begin{equation*}
		\norm{u_1 - \uoh}_\Lt \leq C h \norm{f}_\Lt
		\;\textrm{.}
	\end{equation*}
\end{theorem}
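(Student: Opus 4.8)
The plan is to follow the structure of the second part of Scholz's Theorem 1, reducing everything to quantities we have already controlled. First I would use \thmref{discreteproblemdiff} (together with $\sigma \geq \frac32$, so that $h^{\sigma-1} \leq C\sqrt{h} \leq Ch^{3/4}$ for small $h$) to replace $\uoh$ by $\uoht$: it suffices to bound $\norm{u_1 - \uoht}_\Lt$. Note that $\norm{\uoh - \uoht}_\Lt$ is controlled by $\norm{\uth - \utht}_\Lt$ via Poincar\'e (since both pairs lie in the relevant $V^h$-type spaces, and \lemref{ciarletspacesnorm} makes the $\Lt$-norm of the second component control the $\Hoo$-norm of the first), so this first reduction is routine.

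Next, by \equref{interpolationclaims} and the regularity bound $\norm{u_1}_\Hf \leq C\norm{f}_\Lt$, it is enough to estimate $\norm{\Rho u_1 - \uoht}_\Lt$. The natural tool here is an Aubin--Nitsche duality argument: introduce the auxiliary Poisson problem whose solution $w \in \Hoo \cap \Ht$ satisfies $\hopr{w}{\chi} = \ltpr{\Rho u_1 - \uoht}{\chi}$ for all $\chi \in \Ho$, with $\norm{w}_\Ht \leq C\norm{\Rho u_1 - \uoht}_\Lt$ by the assumed Poisson regularity. Then
\begin{equation*}
\norm{\Rho u_1 - \uoht}_\Lt^2 = \hopr{w}{\Rho u_1 - \uoht} = \hopr{w - \Rh w}{\Rho u_1 - \uoht} + \hopr{\Rh w}{\Rho u_1 - \uoht},
\end{equation*}
where the first term is handled by Scholz's Lemma (\lemref{scholzlemma}) applied with $w$ in the role of the smooth function and $\Rho u_1 - \uoht \in \Sho \subseteq \Sh$ in the role of $\eta$, giving a $\sqrt{h}\,\norm{w}_\Wtinf$ factor --- but one must be careful that $w$ need only be in $\Ht$, so I would instead combine the interpolation estimate $\norm{w - \Rh w}_\Hoo \leq Ch\norm{w}_\Ht$ with a factor $\norm{\Rho u_1 - \uoht}_\Hoo$; the extra $\sqrt h$ Scholz saves is exactly what upgrades $\sqrt h$ to $h$, and I expect the genuinely delicate bookkeeping is precisely here. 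The second term, using $\hopr{\Rh w}{\Rho u_1} = \hopr{w}{\Rho u_1}$ and $\hopr{\Rh w}{\uoht} = \ltpr{\utht}{\Rh w}$ against the discrete equation, reduces to $\hopr{u_1 - \Rho u_1}{\Rh w} + \ltpr{u_2 - \utht}{\Rh w} + (\text{terms measuring }\Rh w - w)$, the first vanishing by Ritz orthogonality (since $\Rh w$ restricted appropriately tests against $\Sho$... actually one needs $\Rh w$ versus $\Rho w$, a boundary subtlety), the second controlled by \thmref{u2error} times $\norm{\Rh w}_\Lt \leq C\norm{w}_\Ht \leq C\norm{\Rho u_1 - \uoht}_\Lt$.

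The main obstacle I anticipate is the boundary treatment: the duality solution $w$ satisfies zero Dirichlet data, so $\Rho w$ is the natural Ritz projection, but $\Rh w$ (full Ritz projection in $\Sh$) is what pairs cleanly with $\uoht \in \Sho$ in the second mixed equation, and reconciling these --- together with tracking the isoparametric boundary modification from \defref{femspaces} --- is where the curved-surface version departs most from Scholz's flat argument. I would manage this by writing $\Rh w = \Rho w + (\Rh w - \Rho w)$, controlling the difference on boundary triangles by a per-triangle estimate of the type in \equref{interpolationclaimspcw} combined with the inverse estimate (exactly as in \lemref{scholzlemma}), which contributes a further $\sqrt h$ and hence does not degrade the final rate. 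Assembling: $\norm{\Rho u_1 - \uoht}_\Lt^2 \leq Ch\norm{f}_\Lt \norm{\Rho u_1 - \uoht}_\Lt + C\sqrt h\norm{f}_\Lt \cdot Ch\norm{\Rho u_1 - \uoht}_\Lt + \dots$, and after dividing through and collecting the $\uoht \to \uoh$ and $u_1 \to \Rho u_1$ reductions, the claimed bound $\norm{u_1 - \uoh}_\Lt \leq Ch\norm{f}_\Lt$ follows.
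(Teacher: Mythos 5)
Your proposal has two genuine gaps, and each one by itself already caps the attainable rate at $\sqrt h$ rather than the claimed $h$. First, the opening reduction is too lossy: \thmref{discreteproblemdiff} only gives $\norm{\uth - \utht}_\Lt \leq C h^{\sigma-1}\norm{f}_\Lt$, and under the standing assumption $\sigma \geq \frac32$ this is merely $O(\sqrt h)$ (your chain ``$h^{\sigma-1} \leq C\sqrt h \leq C h^{3/4}$'' is also reversed, since $\sqrt h \geq h^{3/4}$ for small $h$, and in any case neither is $\leq Ch$). Transferring this to the first components through Poincar\'e and \lemref{ciarletspacesnorm} gives at best $\norm{\uoh - \uoht}_\Lt \leq C h^{\sigma-1}\norm{f}_\Lt$, so the moment you replace $\uoh$ by $\uoht$ you can no longer prove anything better than $O(\sqrt h)$ unless $\sigma \geq 2$. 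The paper avoids this entirely: in its proof of \thmref{u1error} it never separates $\uoh$ from $\uoht$, but works directly with the mesh solutions $\uoh, \uth$ and absorbs the geometric error into the difference forms $c, d$, which contribute $O(h^{\sigma})$, and $O(h^{\sigma-\frac12})$ for the delicate term $d(\uth, \Rho w)$ after the inverse estimate $\norm{\uth}_\Hoo \leq C h^{-\frac12}\norm{f}_\Lt$ --- all of which are $\leq O(h)$ precisely because $\sigma \geq \frac32$.

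Second, the duality argument itself cannot deliver $O(h)$ with a Poisson dual problem. In your splitting the term that carries the rate is essentially $\ltpr{u_2 - \utht}{\Rh w}$, and $\norm{\Rh w}_\Lt$ is of size $\norm{\Rho u_1 - \uoht}_\Lt$ --- it is not small --- so \thmref{u2error} yields only $C\sqrt h\,\norm{f}_\Lt\,\norm{\Rho u_1 - \uoht}_\Lt$, i.e.\ the rate $\sqrt h$. (Incidentally, your first term $\hopr{w - \Rh w}{\Rho u_1 - \uoht}$ vanishes identically, since $\Rh$-orthogonality holds against all of $\Sh \supseteq \Sho$; the Scholz-type worry there is moot, and the $\Rh w$ versus $\Rho w$ boundary issue is not where the difficulty lies.) The missing half power of $h$ in the paper comes from a different dual problem: the \emph{biharmonic} problem $\Delta^2 w = u_1 - \uoh$ with clamped boundary conditions, giving $w \in \Hoo \cap \Hf \subseteq \Wtinf$ and $\norm{w}_\Hf \leq C\norm{u_1 - \uoh}_\Lt$. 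One then expands $\norm{u_1 - \uoh}_\Lt^2 = \hopr{u_1 - \uoh}{\Delta w}$ using both mixed equations and both interpolants $\Rh\Delta w$ and $\Rho w$; the critical piece $\hopr{\uth - \Rh u_2}{w - \Rho w}$ is bounded by \lemref{scholzlemma} applied to $w$ (which needs exactly the $\Wtinf$ regularity that only the biharmonic dual problem supplies) times $\norm{\uth - \Rh u_2}_\Lt \leq C\sqrt h\,\norm{f}_\Lt$, producing the product $\sqrt h \cdot \sqrt h = h$, while the remaining pairings involve interpolation-error-sized factors $\Delta w - \Rh \Delta w$ or the $c,d$ forms. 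With your Poisson dual problem $w$ is only in $\Ht$, there is no equation available to convert $\ltpr{u_2 - \utht}{w}$ into such small pairings, and no route to the claimed $O(h)$ bound.
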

\begin{proof}
	Since \(u_1 - \uoh \in \Hoo\), by assumption the biharmonic equation
	\(\Lap^2 w = u_1 - \uoh\)
	with zero Dirichlet and Neumann boundary
	conditions has a unique solution \(w \in \Hoo \cap \Hf\).
	As before, we use the geometers' convention that the Laplacian be
	\emph{positive} semidefinite. 

	We use the mixed biharmonic PDEs, Ritz projection,
	and integration by parts repeatedly to obtain
	\begin{equation*}\begin{split}
		\norm{u_1 - \uoh}_\Lt^2 &= \ltpr{u_1 - \uoh}{\Lap^2 w}
		=  \hopr{u_1 - \uoh}{\Lap w} \\
		&= \hopr{u_1 - \uoh}{\Lap w - \Rh \Lap w}
		+ \ltpr{u_2 - \uth}{\Rh \Lap w}
		- d(\uoh, \Rh \Lap w) + c(\uth, \Rh \Lap w) \\
&= \hopr{u_1 - \uoh}{\Lap w - \Rh \Lap w}
		+\ltpr{u_2 - \uth}{\Rh\Lap w - \Lap w}
		+ \hopr{u_2 - \uth}{w - \Rho w}\\
		&\quad - d(\uth, \Rho w) + c(f, \Rho w) -
		d(\uoh, \Rh \Lap w) + c(\uth, \Rh \Lap w)
		\;\textrm{.}
	\end{split}\end{equation*}

	Using \equref{interpolationclaims}, the first term of the last expression
	can be bounded by
	\begin{equation*}
		\abs{\hopr{u_1 - \uoh}{\Lap w - \Rh \Lap w}}
		= \abs{\hopr{u_1 - \Rho u_1}{\Lap w - \Rh \Lap w}}
		\leq C h^2 \norm{f}_\Lt \norm{w}_\Hf
		\;\textrm{.}
	\end{equation*}
	
	Using \equref{interpolationclaims} and \thmref{u2error}, the bound for the
	second term is
	\begin{align*}
		\abs{\ltpr{u_2 - \uth}{\Lap w - \Rh \Lap w}}
		&\leq \abs{\ltpr{\Rh u_2 - \uth}{\Lap w - \Rh \Lap w}
		 + \ltpr{u_2 - \Rh u_2}{\Lap w - \Rh \Lap w}} \\
		&\leq C h^{\frac 32} \norm{f}_\Lt \norm{w}_\Hf
		\;\textrm{.}
	\end{align*}

	We can bound the third term as follows,
	\begin{equation*}\begin{split}
		\abs{\hopr{u_2 - \uth}{w - \Rho w}}
		&\leq \abs{\hopr{u_2 - \Rh u_2}{w - \Rho w}} +
		\abs{\hopr{\uth - \Rh u_2}{w - \Rho w}} \\
		&\leq C h^2 \norm{u_2}_\Ht \norm{w}_\Ht +
		\abs{\hopr{\uth - \Rh u_2}{w - \Rho w}}
		\quad\textrm{ (by \equref{interpolationclaims})} \\
		&\leq C h^2 \norm{u_2}_\Ht \norm{w}_\Ht +
		C h \norm{f}_\Lt \norm{w}_\Wtinf
		\quad\textrm{ (by \lemref{scholzlemma} and \thmref{u2error})} \\
		&\leq C h \norm{f}_\Lt \norm{w}_\Hf
		\;\textrm{.}
	\end{split}\end{equation*}

	Finally, three of the remaining terms can be bounded as
	\begin{equation*}        
		\abs{d(\uoh, \Rh \Lap w)} + \abs{c(\uth, \Rh \Lap w)}
		+ \abs{c(f, \Rho w)}
		\leq C h^{\sigma} \norm{f}_\Lt \norm{w}_\Hf
		\;\textrm{,}
	\end{equation*}
	where we used \lemref{differenceformbounds} and \equref{interpolationclaims}.
	
	In order to bound the last remaining term, observe that
	\begin{equation*}\begin{split}
	\abs{d(u_2^h, \Rho w)} 
	&\leq C h^{\sigma} \norm{u_2^h}_\Hoo
        \norm{\Rho w}_\Hoo
        \leq C h^{\sigma} \norm{u_2^h}_\Hoo \norm{w}_\Hoo 	\;\textrm{,}\\
        \norm{u_2^h}_\Hoo &\leq \norm{\Rh u_2}_\Hoo
        + \norm{u_2^h - \Rh u_2}_\Hoo
        \leq C\norm{u_2}_\Hoo + C h^{-1} \norm{u_2^h - \Rh u_2}_\Lt \\
        &\leq C\norm{f}_\Lt + C h^{-\frac{1}{2}} \norm{f}_\Lt
        \leq C h^{-\frac{1}{2}} \norm{f}_\Lt \;\textrm{,}\\
        \abs{d(u_2^h, \Rho w)} &\leq C h^{\sigma - \frac 12} \norm{f}_\Lt\norm{w}_\Hoo
        	\;\textrm{,}
	\end{split}\end{equation*}
	where we used \lemref{differenceformbounds},
	\lemref{ritzinterpstable}, and \thmref{u2error}.
	
	Using that
	\(\Lap^2 w = u_1 - \uoh\),
	we obtain \(\norm{w}_\Hf \leq C \norm{u_1 - \uoh}_\Lt\).
	Together with the assumption that \(\sigma \geq \frac32\),
	these estimates show that
	\begin{equation*}\begin{split}
		\norm{u_1 - \uoh}_\Lt^2 &\leq C h \norm{f}_\Lt \norm{u_1 - \uoh}_\Lt
		\;\textrm{,}
	\end{split}\end{equation*}
	which proves the theorem.
\end{proof}\\

A simple corollary provides a convergence rate of \(h^\frac34\) for the
gradient of \(u_1\).

\begin{corollary}\label{cor:u1h1error}
	We have that
	\begin{equation*}
		\norm{u_1 - \uoh}_\Hoo \leq C h^\frac34 \norm{f}_\Lt
	\end{equation*}
\end{corollary}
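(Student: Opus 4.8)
The plan is to interpolate, in effect, between the $\Lt$-rate $h$ for $u_1$ (\thmref{u1error}) and the $\Lt$-rate $\sqrt h$ for $u_2$ (\thmref{u2error}); their geometric mean is exactly $h^{3/4}$. Concretely, set $e_1 \defeq u_1 - \uoh$ and split $e_1 = \rho + \theta$ with $\rho \defeq u_1 - \Rho u_1$ and $\theta \defeq \Rho u_1 - \uoh \in \Sho$. By Poincar\'e's inequality it is enough to bound $\hopr{e_1}{e_1} = \hopr{e_1}{\rho} + \hopr{e_1}{\theta}$.

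First I would collect the smallness estimates that feed the argument. From \lemref{interpolationinequalities} and the regularity assumption $\norm{u_1}_\Hf \leq C\norm{f}_\Lt$ one gets $\norm{\rho}_\Hoo \leq C h \norm{f}_\Lt$, hence also $\norm{\theta}_\Lt \leq \norm{\rho}_\Lt + \norm{u_1-\uoh}_\Lt \leq C h \norm{f}_\Lt$ by \thmref{u1error}. Moreover, Ritz stability (\lemref{interpolationinequalities}), the boundedness of the discrete solutions (\lemref{uniqueminimizer} and the discussion following it) and \thmref{u2error} give $\norm{\theta}_\Hoo, \norm{\uoh}_\Hoo, \norm{\uth}_\Lt \leq C\norm{f}_\Lt$ with $h$-independent constants. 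The first summand is then disposed of crudely: $\abs{\hopr{e_1}{\rho}} \leq \norm{e_1}_\Hoo \norm{\rho}_\Hoo \leq C h \norm{f}_\Lt \norm{e_1}_\Hoo$.

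The second summand is where \thmref{u2error} enters, and this is the main point of the proof. Since $\theta \in \Sho \subseteq \Sh$, testing the smooth mixed formulation \equref{smoothmixedformulation} and the discrete one \equref{discreteproblemmesh} against $\theta$ and using the definitions of the difference forms $c$ and $d$ yields
\[
  \hopr{e_1}{\theta} = \ltpr{u_2}{\theta} - \hopr{\uoh}{\theta}
  = \ltpr{u_2 - \uth}{\theta} + c(\uth,\theta) - d(\uoh,\theta).
\]
Now $\abs{\ltpr{u_2 - \uth}{\theta}} \leq \norm{u_2 - \uth}_\Lt \norm{\theta}_\Lt \leq C\sqrt h \norm{f}_\Lt \cdot C h \norm{f}_\Lt$, while $\abs{c(\uth,\theta)} \leq C h^\sigma \norm{\uth}_\Lt \norm{\theta}_\Lt$ and $\abs{d(\uoh,\theta)} \leq C h^\sigma \norm{\uoh}_\Hoo \norm{\theta}_\Hoo$ by \lemref{differenceformbounds}; since $\sigma \geq \tfrac32$ all three are $O(h^{3/2})\norm{f}_\Lt^2$, so $\abs{\hopr{e_1}{\theta}} \leq C h^{3/2} \norm{f}_\Lt^2$.

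Putting the pieces together, $\norm{e_1}_\Hoo^2 \leq C h \norm{f}_\Lt \norm{e_1}_\Hoo + C h^{3/2} \norm{f}_\Lt^2$; absorbing the first term on the right via Young's inequality leaves $\norm{e_1}_\Hoo^2 \leq C h^{3/2} \norm{f}_\Lt^2$, i.e.\ $\norm{u_1 - \uoh}_\Hoo \leq C h^{3/4} \norm{f}_\Lt$. The one delicate point — and the reason this is not as clean as in the planar setting — is bookkeeping of $h$-independence: we may only use that the $\Hoo$ norm of $\uoh$ and the $\Lt$ norm of $\uth$ (\emph{not} the $\Ho$ norm of $\uth$) are controlled by $\norm{f}_\Lt$, so every estimate must be arranged to involve only those quantities, and it is precisely the pairing of the $O(\sqrt h)$ factor $\norm{u_2 - \uth}_\Lt$ against the $O(h)$ factor $\norm{\theta}_\Lt$ that produces the $h^{3/4}$ rate.
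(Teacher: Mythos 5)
Your proof is correct and follows essentially the same route as the paper: test the error equation with the error itself, pair the \(O(\sqrt h)\) bound \(\norm{u_2-\uth}_\Lt\) from \thmref{u2error} against the \(O(h)\) bound from \thmref{u1error}, control the metric-distortion terms via \lemref{differenceformbounds} using only \(\norm{\uth}_\Lt\) and \(\norm{\uoh}_\Hoo\), and conclude with \(\sigma\geq\frac32\). Your explicit splitting \(e_1=\rho+\theta\) through the Ritz approximation is precisely what the paper's terse phrase ``going through the Ritz approximation as an intermediate'' refers to, so this is a more detailed write-up of the same argument rather than a different one.
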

\begin{proof}
	Using the mixed biharmonic problem, it follows that
	\begin{equation*}\begin{split}
		\hopr{\uoh - u_1}{\uoh - u_1} &=
		\ltpr{\uth - u_2}{\uoh - u_1} + d(\uoh, \uoh - u_1)
		- c(\uth, \uoh - u_1)
		\;\textrm{,} \\
		\norm{u_1 - \uoh}_\Hoo^2 &\leq
		C h^\frac32 \norm{f}_\Lt^2 + C h^\sigma \norm{f}_\Lt^2
		+ C h^{\sigma+1} \norm{f}_\Lt^2
		\;\textrm{,}
	\end{split}\end{equation*}
	where we applied the estimates from \lemref{differenceformbounds},
	the fact that the solution of the discrete problem is bounded, and
	going through the Ritz approximation as an intermediate.
Since we assumed that \(\sigma \geq \frac32\), this proves the corollary.
\end{proof}
\\

\subsection{The no-boundary case}
\label{sec:noboundarycase}

Here we provide the proof for the case of empty boundary, which is much simpler
than the case of a nonempty boundary.
This case is also effectively handled in the work of \citet{Elliott2019b}.

A similar analysis (with minor modifications) to the one presented in this section also yields corresponding convergence
results for the case of a surface with boundaries and boundary conditions
\(\Lap u = u = 0\) on
\(\partial\Surf\).

If there is no boundary,
the mixed formulation decouples, in the sense that \(u_{1}, u_{2} \in \Hoo\)
solve the decoupled Poisson equations 
\begin{align*}
\hopr{u_{2}}{\xi} &= \ltpr{f}{\xi} \quad\quad \forall \xi \in \Hoo \ ,\\
\hopr{u_{1}}{\eta} &= \ltpr{u_{2}}{\eta} \quad\quad \forall \eta \in \Hoo \ .
\end{align*}
Notice that unlike the case of nonempty boundaries, we here have
 \(u_{2}, \eta \in \Hoo\) instead of  \(u_{2}, \eta \in \Ho\), where \(\Hoo\) is the subspace of functions in \(\Ho\) that integrate to zero; see \equref{weakbiharmonic}.
The same pertains to the corresponding discrete formulations.
In this case, we obtain a better convergence rate:

\begin{theorem}
    It holds that
    \begin{equation*}\begin{split}
        \norm{\uth - u_2}_\Lt + h \norm{\uth - u_2}_\Hoo
        &\leq C h^2 \norm{f}_\Lt \;\textrm{,} \\
        \norm{\uoh - u_1}_\Lt + h \norm{\uoh - u_1}_\Hoo
        &\leq C h^2 \norm{f}_\Lt
        \;\textrm{.}
    \end{split}\end{equation*}
\end{theorem}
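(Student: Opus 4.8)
The plan is to exploit the fact that on a closed surface the biharmonic equation decouples into two Poisson problems. Since here $\Hoo=\Ho$ and $\Sho=\Sh$, the two lines of \equref{discreteproblemmesh} are no longer genuinely coupled: the first line says that $\uth\in\Sh$ is the piecewise-linear finite element solution \emph{on the mesh} (that is, with respect to the disturbed inner products) of the Poisson problem $\Delta u_2=f$, and the second line says that $\uoh\in\Sh$ is the finite element solution of $\Delta u_1=\uth$, both normalised to zero mean. On the smooth side $u_2$ solves $\Delta u_2=f$ and $u_1$ solves $\Delta u_1=u_2$ (recall that $f$ has zero mean, as required for solvability on a closed surface), with $\norm{u_2}_\Ht\le C\norm f_\Lt$ and $\norm{u_1}_\Ht\le C\norm{u_2}_\Lt\le C\norm f_\Lt$ by elliptic regularity. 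Thus the theorem reduces to two applications of the convergence theory for the linear finite element method for the Laplace--Beltrami equation on curved surfaces (\citet{Dziuk1988}, \citet{Wardetzky2006}; with Condition \condref{bijmapping} the latter applies to general right-hand sides, cf.\ \remref{wardetzkybdry}), together with a perturbation argument to pass, in the second equation, from the exact datum $u_2$ to the computed datum $\uth$. Throughout, the discrepancy between the undisturbed and the disturbed average of an $L^2$ function is $O(h^\sigma)$ times its $\Lt$ norm by \lemref{boundona}, so the two zero-mean normalisations may be interchanged at a cost that is never dominant below; we henceforth take all zero means in the undisturbed sense.

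\emph{Step 1 (the error in $u_2$).} Writing $\uth-\Rh u_2$, testing the first line of \equref{discreteproblemmesh} with $\xi\in\Sh$, and using $\hopr{u_2}{\xi}=\ltpr f\xi$ together with the definition of $\Rh$, one finds $\hopra{\uth-\Rh u_2}{\xi}=d(\Rh u_2,\xi)-c(f,\xi)$ for all $\xi\in\Sh$. The disturbed Poincar\'e inequality makes $\hopra{\cdot}{\cdot}$ coercive on zero-mean functions, so \lemref{differenceformbounds} gives $\norm{\uth-\Rh u_2}_\Ho\le Ch^\sigma\norm f_\Lt$, and combined with \lemref{interpolationinequalities} this yields $\norm{u_2-\uth}_\Ho\le Ch\norm f_\Lt$. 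For the $\Lt$ bound we use Aubin--Nitsche duality: let $\psi\in\Ho$ solve $\Delta\psi=u_2-\uth$ with zero mean, so $\norm\psi_\Ht\le C\norm{u_2-\uth}_\Lt$; integrating by parts on the closed surface and proceeding as above,
\[
\norm{u_2-\uth}_\Lt^2 \;=\; \hopr{u_2-\uth}{\psi-\Rh\psi} \;+\; c(f,\Rh\psi) \;-\; d(\uth,\Rh\psi).
\]
The first term is $\le Ch^2\norm f_\Lt\norm\psi_\Ht$ by \lemref{interpolationinequalities}. For the other two, the crucial point is that $\norm{\uth}_\Ho\le C\norm f_\Lt$: this is merely $\Ho$-stability of the finite element solution of a Poisson problem with $\Lt$ data, and it holds here precisely because $\Sho=\Sh$. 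Hence \lemref{differenceformbounds} bounds both terms by $Ch^\sigma\norm f_\Lt\norm\psi_\Ht$, and dividing by $\norm{u_2-\uth}_\Lt$ gives $\norm{u_2-\uth}_\Lt\le Ch^{\min(2,\sigma)}\norm f_\Lt$.

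\emph{Step 2 (the error in $u_1$).} Let $\check u_1\in\Sh$ be the disturbed finite element solution with the \emph{exact} datum, i.e.\ $\hopra{\check u_1}{\eta}=\ltpra{u_2}{\eta}$ for all $\eta\in\Sh$. Repeating Step 1 with $u_2$ in place of $f$ (using $\norm{u_1}_\Ht\le C\norm{u_2}_\Lt\le C\norm f_\Lt$ and $\norm{\check u_1}_\Ho\le C\norm{u_2}_\Lt$) gives $\norm{u_1-\check u_1}_\Lt+h\norm{u_1-\check u_1}_\Ho\le Ch^{\min(2,\sigma)}\norm f_\Lt$. The remaining difference $\check u_1-\uoh\in\Sh$ satisfies $\hopra{\check u_1-\uoh}{\eta}=\ltpra{u_2-\uth}{\eta}$ for all $\eta\in\Sh$ --- it is simply the disturbed finite element solution with datum $u_2-\uth$ --- so testing with $\eta=\check u_1-\uoh$ and using coercivity gives $\norm{\check u_1-\uoh}_\Ho\le C\norm{u_2-\uth}_\Lt\le Ch^{\min(2,\sigma)}\norm f_\Lt$, whence also $\norm{\check u_1-\uoh}_\Lt\le Ch^{\min(2,\sigma)}\norm f_\Lt$ by the Poincar\'e inequality. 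Adding the two contributions proves $\norm{u_1-\uoh}_\Lt+h\norm{u_1-\uoh}_\Ho\le Ch^{\min(2,\sigma)}\norm f_\Lt$.

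The only non-routine ingredient is the observation, already flagged in \secref{convergence}, that $\norm{\uth}_\Ho$ need \emph{not} be bounded independently of $h$ in the general curved mixed problem --- which is exactly what would make terms such as $d(\uth,\Rh\psi)$ uncontrollable. In the no-boundary case this obstruction disappears, because $\uth$ is literally a Poisson finite element solution on $\Sho=\Sh$ with $\Lt$ data and is therefore $\Ho$-stable; this is why the argument collapses into two clean Poisson estimates and recovers the optimal rates. The only other care needed is the purely technical tracking of the geometric-consistency terms of order $h^\sigma$ produced by \lemref{differenceformbounds}: by \condref{sigmabound} these are at most $h^{3/2}$, and they equal $h^2$ for inscribed-type triangulations (where the geometric perturbation of \lemref{boundona} is $O(h^2)$), so in that regime $\min(2,\sigma)=2$ and the stated $h^2$ bounds follow.
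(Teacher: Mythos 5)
Your skeleton is the same as the paper's: after observing the decoupling, you introduce the auxiliary discrete solution with exact datum $u_2$ (your $\check u_1$ is exactly the paper's $\nu_1^h$ from \equref{nuequation}), you bound $\check u_1-\uoh$ by testing the error equation with itself, precisely as in \equref{pfforuh}, and you add the two contributions. Where you deviate is in how the two underlying surface-Poisson error estimates are obtained: the paper simply invokes \citet[Theorem 3.3.3]{Wardetzky2006} as a black box for $\norm{\uth-u_2}_\Lt+h\norm{\uth-u_2}_\Ho\leq Ch^2\norm{f}_\Lt$ (and likewise for $\nu_1^h-u_1$), while you rederive these estimates by hand, controlling every geometric discrepancy through the generic $O(h^\sigma)$ bounds of \lemref{differenceformbounds}.

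That substitution is where your proof falls short of the statement. Because terms such as $c(f,\Rh\psi)$ and $d(\uth,\Rh\psi)$ are only bounded by $Ch^\sigma\norm{f}_\Lt\norm{\psi}_\Ho$, your duality step caps the $\Lt$ rate at $h^{\min(2,\sigma)}$, and \condref{sigmabound} only guarantees $\sigma\geq\frac32$; so under the paper's standing hypotheses your argument yields an $h^{3/2}$ bound, not the claimed $h^2$, as you yourself concede in the closing paragraph by restricting to $\sigma\geq 2$ (inscribed-type meshes). To prove the theorem as stated you must either invoke the cited Poisson convergence theorem, or give a sharper treatment of the geometric consistency terms than \lemref{differenceformbounds} provides; your explicit bookkeeping does make transparent that the stated $h^2$ rate rests entirely on that cited estimate, but as written your proposal establishes a strictly weaker rate. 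The remainder of your argument --- the $\Ho$-stability of $\uth$ in the no-boundary case, the handling of the zero-mean normalizations, and the perturbation step passing from datum $u_2$ to datum $\uth$ in the second equation --- is correct and coincides with the paper's proof.
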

\begin{proof}
    By \citet[Theorem 3.3.3]{Wardetzky2006} it holds that
    \begin{equation*}
        \norm{\uth - u_2}_\Lt + h \norm{\uth - u_2}_\Hoo
        \leq C h^2 \norm{f}_\Lt
        \;\textrm{.}
    \end{equation*}

    To bound the error in \(\uoh\) we turn to the solution \(\nu_1 \in \Sho\)
    of the discrete Poisson problem
    \begin{equation}\label{eq:nuequation}
        \hopra{\nu_1^h}{\eta} = \ltpra{u_2}{\eta} \quad\quad \forall \eta \in \Sho
        \;\textrm{.}
    \end{equation}
    As \(\nu_1^h\) is the solution to a discrete Poisson problem, we obtain
    \begin{equation}\label{eq:pfforvh}
        \norm{\nu_1^h - u_1}_\Lt + h \norm{\nu_1^h - u_1}_\Hoo
        \leq C h^2 \norm{u_2}_\Lt \leq C h^2 \norm{f}_\Lt
        \;\textrm{.}
    \end{equation}

    As for the error between \(\uoh\) and \(\nu_1^h\), we know that
    \begin{equation}\begin{split}\label{eq:pfforuh}
        \norm{\uoh - \nu_1^h}_\Hoo^2 
        &\leq C \abs{\ltpra{\uth - u_2}{\uoh - \nu_1^h}}
        \leq C \norm{\uth - u_2}_\Lt \norm{\uoh - \nu_1^h}_\Hoo  \;\textrm{,}\\
        \norm{\uoh - \nu_1^h}_\Hoo &\leq C \norm{\uth - u_2}_\Lt
        \leq C h^2 \norm{f}_\Lt
        \;\textrm{.}
    \end{split}\end{equation}

    Combining (\ref{eq:pfforvh}) and (\ref{eq:pfforuh}), and using
    the Poincar\'e inequality,
we obtain that
    \begin{equation*}
        \norm{\uoh - u_1}_\Lt + h \norm{\uoh - u_1}_\Hoo
        \leq C h^2 \norm{f}_\Lt
        \;\textrm{,}
    \end{equation*}
    which proves the theorem.

\end{proof}

\section{Algorithm \& Experiments}
\label{sec:experiments}

The numerical method whose convergence we have shown in \secref{convergence}
is implemented by solving the following linear system for
\(\mathbf{u_1} \in \R^{\dim\hatSho}\), and
\(\mathbf{u_2} \in \R^{\dim\hatSh}\):
\begin{equation}\label{eq:maineqforalgorithm}
    \begin{pmatrix}
        0 & \mathbf{L} \\ \mathbf{L}^\intercal & -\mathbf{M}
    \end{pmatrix}
    \begin{pmatrix}
        \mathbf{u_1} \\ \mathbf{u_2}
    \end{pmatrix}
    =
    \begin{pmatrix}
        \mathbf{f} \\ 0
    \end{pmatrix}
    \;\textrm{.}
\end{equation}
Here we use the piecewise linear Lagrange basis functions \((\varphi_i)_i\)
as a basis for the finite element space \(\hatSho\), and the piecewise linear
Lagrange basis functions \((\rho_i)_i\) is the basis for the finite element
space \(\hatSh\).
The linear system \equref{maineqforalgorithm} corresponds to the discrete
problem on the discrete mesh \equref{intro:discreteproblemexplicitlymesh},
and is equivalent to the problem \equref{discreteproblemmesh}
(via the map \(\Phi\)), for which \secref{convergence} proves convergence estimates
(lifting \equref{intro:discreteproblemexplicitlymesh} from \(\dSurf\) to
\(\Surf\) results in the equivalent \equref{discreteproblemmesh}).
The solutions \(\hatuoh, \hatuth\) (which are equivalent to \(\uoh, \uth\))
are recovered via the sums
\begin{equation}\label{eq:discreterecoveringsolutions}
\hatuoh = \sum_i (\mathbf{u_1})_i \; \varphi_i \quad\text{and}\quad 
    \hatuth = \sum_i (\mathbf{u_2})_i \; \rho_i
    \;\textrm{.}
\end{equation}In \equref{maineqforalgorithm},  \(\mathbf{L}\) denotes the sparse Laplacian stiffness matrix,
\begin{equation}\label{eq:laplacianstiffnessmat}
    \mathbf{L} =\left(
        \int_{\dSurf} \dgrad \varphi_i \cdot \dgrad \rho_j \;\dx
    \right)_{i,j}
    \; \in \R^{\dim\hatSho \times \dim\hatSh}
    \;\textrm{,}
\end{equation}
\(\mathbf{M}\) denotes the mass matrix,
\begin{equation}\label{eq:massmatrixdefn}
    \mathbf{M} = \left( \int_{\dSurf} \rho_i \; \rho_j \;\dx \right)_{i,j}
    \; \in \R^{\dim\hatSh \times \dim\hatSh}
    \;\textrm{,}
\end{equation}
and \(\mathbf{f}\) is the right-hand side,
\begin{equation}\label{eq:drighthandsidedefn}
    \mathbf{f} =
    \left( \int_{\dSurf} \varphi_i \; ( f \circ \Psi ) \;\dx \right)_i
    \; \in \R^{\dim\hatSho}
    \;\textrm{,}
\end{equation}
where \(\Psi\) is defined in \defref{closestpointprojection}.

\equref{laplacianstiffnessmat}-\equref{massmatrixdefn} are the standard
stiffness and mass matrices for the Poisson equation on triangle meshes,
as they appear, e.g., in the work of \citet{Dziuk1988}.

\begin{figure}
    \centering
    \includegraphics{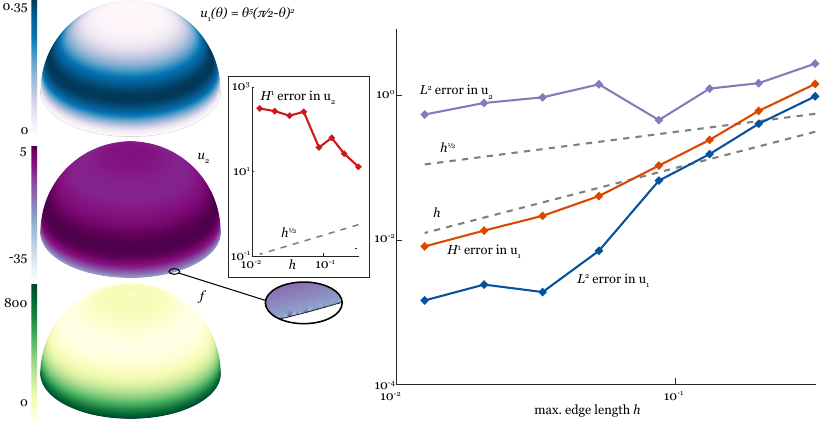}
    \caption{
    Solving the biharmonic equation on a hemisphere ($\log$-$\log$ plot).
    \label{fig:sphericalcappi2p2}}
\end{figure}

We performed a variety of numerical experiments with our method in MATLAB
R2020a,
using the gptoolbox \cite[]{gptoolbox} and triangle \cite[]{triangle}
libraries.
For all our experiments, errors are computed by measuring the difference
between the method's solution and the exact solution, projected onto the
finite element space.
The exact solutions and the right-hand sides are sampled from the exact
functions pointwise at vertices in the parameter space, and
these samples are then used as degrees of freedom of the discrete finite
element spaces.
Implementation details are provided in the supplemental material, along with
the MATLAB code used to generate the images.

\subsection{Spherical cap}
\figref{sphericalcappi2p2} shows our method
used to solve the biharmonic equation on a hemisphere.
We observe convergence to the exact solution,
\((\pi/2 - \theta)^2 \theta^5\), where \(\theta\) is the colatitude of the
spherical coordinate system, as predicted by the theory.
We observe no convergence in the \(\Ho\) norm of
\(u_2\), where the theory makes no guarantees.
Convergence to the exact solution is observed with rates at least as good as
predicted by our theory.
It can be seen in the solution plot for \(u_2\) that the function is
oscillating strongly near the boundary -- which is due to the fact that
\(\uth\) converges only in the \(\Lt\) norm, but not in the \(\Ho\) norm.
This corresponds to the theoretical intuition from \remref{noboundonl2} that
the Ciarlet theory offers no way to directly estimate the norm of the
derivative of \(\uth\):
we can only control the \(\Lt\) norm.

\begin{figure}
    \centering
    \includegraphics{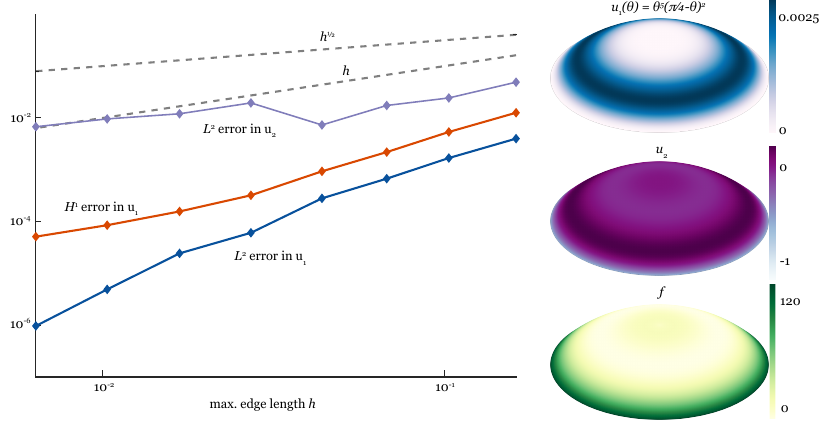}
    \caption{
    Solving the biharmonic equation on a spherical cap with colatitude from
    \(0\) to \(\pi/4\) ($\log$-$\log$ plot).
    \label{fig:sphericalcappi4}}
\end{figure}

As discussed in \remref{maybestraightboundaries}, it might be possible to relax
\condref{bijmapping} to allow for triangles with straight edges and vertices
inscribed into the boundary instead of requiring that the closest point
projection is a global bijection. 
\figref{sphericalcappi4} shows an example of this conjecture in action for a
spherical cap (not a hemisphere).\footnote{In Figures \ref{fig:sphericalcappi2p2}-\ref{fig:schwarzlantern}, the
curved boundaries of the smooth surface become straight when projected onto the
triangles, thus boundary triangles with straight edges fulfill
\condref{bijmapping}.
This is no longer true of the boundary of the spherical cap in
\figref{sphericalcappi4}.}
In this example, the
colatitude runs from \(0\) to \(\pi/4\), and
\condref{bijmapping} is not exactly fulfilled (as edges of boundary
triangles are straight).
Regardless, we observe convergence to the exact solution,
\((\pi/4 - \theta)^2 \theta^5\), where \(\theta\) is the colatitude of the spherical coordinate system.
We still empirically observe convergence,
even if \condref{bijmapping} is relaxed in the above manner.

\begin{figure}
    \centering
    \includegraphics{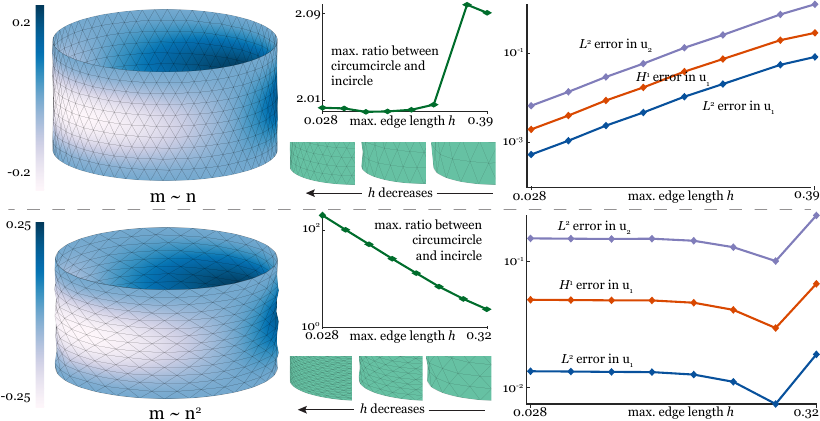}
    \caption{
    Solving the biharmonic equation on a Schwarz lantern
    perturbed on the order of \((\)mean edge length\()^2\)
    ($\log$-$\log$ plot).
    \label{fig:schwarzlantern}}
\end{figure}

\subsection{Schwarz's Lantern}
In \figref{schwarzlantern} the importance of the triangle regularity conditions
are demonstrated.
We solve the biharmonic equation with exact solution
\((\cos\varphi) (\sin \pi z) z (1-z)\),
where \(\varphi\) is the angular coordinate and \(z\) is the
\(z\)-coordinate of the cylindrical coordinate system
on a Schwarz lantern perturbed on the order of \((\)mean edge length\()^2\).
The standard Schwarz lantern fulfills conditions \textbf{(C1-C4)} if it fulfills
the triangle regularity condition.
Triangle regularity, in turn, is satisfied if \(m \sim n\),
where \(m\) is the number of vertices along the equator, and \(n\) is the
number of of vertices
along the axis of rotational symmetry.
In this case convergence is observed
(in fact, we even observe a rate of \(h^2\), which is better than predicted).
If, on refinement, \(m\) increases much
more quickly than \(n\), such as when \(m \sim n^2\), the mixed finite
element method ceases to converge.
This is a standard result, and
not at all surprising, since,
in this case, not even the surface area of the discrete mesh converges to the
surface area of the respective smooth cylinder under refinement
(see, for example, the book of \citet[Section 3.1.3]{GeneralizedCurvatures}).

\section*{Acknowledgments}

We thank Prof. Qiang Du for insightful discussion on the topic of finite
elements for curved surfaces.
We thank Abhishek Madan, Derek Liu, Henrique Maia and Anne Fleming for
proofreading.

This work is supported by the NSF under awards CCF-17-17268 and IIS-17-17178.
This work is supported by the Swiss National Science Foundation's Early
Postdoc.Mobility fellowship.
This work is partially supported by the Canada Research Chairs Program and the
Fields Centre for Quantitative Analysis and Modeling.
This work is partially supported by the DFG project 282535003:
Geometric curvature functionals: energy landscape and discrete methods.
\\

\quad\\
\quad\\
\quad\\
\quad\\
\quad\\
\quad\\
\quad\\
\quad\\
\quad\\
\quad\\
\quad\\
\quad\\
\quad\\
\quad\\
\quad\\
\quad\\
\quad\\

\pagebreak
\bibliographystyle{IMANUM-BIB}
\bibliography{mixedfem-biharmonic-surfaces}

\end{document}